\definecolor{webgreen}{rgb}{0,.5,0}
\def\C{{\mathds{C}}}
\def\N{{\mathds{N}}}
\def\1{{\bf 1}}
\def\lcm{\operatorname{lcm}}
\def\deg{\operatorname{deg}}
\newtheorem{theorem}{Theorem}
\newtheorem{lemma}[theorem]{Lemma}
\newtheorem{corollary}[theorem]{Corollary}
\newtheorem{example}[theorem]{Example}
\newtheorem{remark}[theorem]{Remark}
\begin{document}

\title{{\bf On the asymptotic density of $k$-tuples of positive integers with pairwise non-coprime components}}
\author{L\'aszl\'o T\'oth  \\
Department of Mathematics \\
University of P\'ecs \\
Ifj\'us\'ag \'utja 6, 7624 P\'ecs, Hungary \\
E-mail: {\tt ltoth@gamma.ttk.pte.hu}}
\date{}
\maketitle

\centerline{Journal of Integer Sequences {\bf 27} (2024), Article 24.8.5}

\begin{abstract} We use the convolution method for arithmetic functions of several variables to deduce 
an asymptotic formula for the number of $k$-tuples of positive integers with components which are pairwise non-coprime and $\le x$. More generally, we obtain asymptotic formulas on the number of $k$-tuples $(n_1,\ldots,n_k)\in \N^k$ such that at 
least $r$ pairs $(n_i,n_j)$, respectively exactly $r$ pairs are coprime. Our results answer the questions raised by Moree (2005, 2014), and generalize and refine related results obtained by Heyman (2014) and Hu (2014).
\end{abstract}

{\sl 2020 Mathematics Subject Classification}: 11A25, 11N25, 11N37, 05A15, 05C07

{\sl Key Words and Phrases}: pairwise coprime integers, pairwise non-coprime integers, 
asymptotic density, asymptotic formula, multiplicative function of several variables, 
inclusion-exclusion principle, simple graph, vertex cover

\section{Introduction and motivation}

Let $\N=\{1,2,\ldots \}$ and let $k\in \N$, $k\ge 2$. It is well-known that the asymptotic density of the $k$-tuples
$(n_1,\ldots,n_k)\in \N^k$ having relatively prime (coprime) components is $1/\zeta(k)$. This result goes back to the work of Ces\`{a}ro,  Dirichlet, Mertens and others. See, e.g., \cite{Chr1956,FerFer2021,Nym1972,Tot2001,Tot2002}. More exactly, one has the asymptotic estimate
\begin{equation} \label{rel_prime}
	\sum_{\substack{n_1,\ldots,n_k\le x\\ \gcd(n_1, \ldots,n_k)=1}} 1 
	= \frac{x^k}{\zeta(k)}+\begin{cases} O(x\log x), & \text{ if $k=2$};\\
		O(x^{k-1}),  & \text{ if $k\ge 3$}. 
	\end{cases}
\end{equation}

The asymptotic density of the $k$-tuples $(n_1,\ldots,n_k)\in \N^k$ with pairwise coprime components is 
\begin{equation} \label{density_pairwise_rel_prime}
	A_k =\prod_p \left(1-\frac1{p}\right)^{k-1} \left(1+\frac{k-1}{p}\right)= \prod_p \left(1+\sum_{j=2}^k (-1)^{j-1} (j-1) 
	\binom{k}{j} \frac1{p^j}\right),
\end{equation}
and we have the asymptotic formula  
\begin{equation} \label{form}
	\sum_{\substack{n_1,\ldots,n_k\le x\\ \gcd(n_i,n_j)=1\\ 1\le i<j\le k}} 1 
	= A_k x^k + O(x^{k-1}(\log x)^{k-1}),
\end{equation}
valid  for every fixed $k\ge 2$, proved by the author \cite{Tot2002} using an inductive process on $k$. The value of $A_k$ was also deduced by Cai and Bach \cite[Th.\ 3.3]{CaiBac2003} using probabilistic arguments. Formula \eqref{form} has been reproved by the author \cite{Tot2016}, in a more general form, namely by investigating 
$m$-wise relatively prime integers (that is, every $m$ of them are relatively prime) and by using the convolution method for functions of several variables. 
Note that the asymptotic formula for $m$-wise coprime integers was first proved by Hu \cite{Hu2013} by the inductive method with a weaker error term.

Now consider pairwise non-coprime positive integers $n_1,\ldots,n_k$, satisfying 
$\gcd(n_i,n_j)\ne 1$ for all $1\le i<j\le k$. Let $\beta=\beta_k: \N^k\to \{0,1\}$ denote the characteristic function
of $k$-tuples having this property, that is, 
\begin{equation} \label{def_beta}
	\beta(n_1,\ldots,n_k)= \begin{cases} 
		1, & \text{ if $n_1,\ldots,n_k$ are pairwise non-coprime}; \\
		0, & \text{ otherwise}.
	\end{cases}    
\end{equation}

Moree \cite{Mor2005,Mor2014} and Heyman \cite{Hey2014} raised the question to find the asymptotic 
density $C_k$ of $k$-tuples with pairwise non-coprime components. If $k=2$, then the answer is 
immediate by \eqref{rel_prime}: $C_2= 1-1/\zeta(2)$. Heyman \cite{Hey2014} obtained the value $C_3$
and deduced an asymptotic formula for the sum $\sum_{n_1,n_2,n_3\le x} \beta(n_1,n_2,n_3)$  
by using functions of one variable and the inclusion-exclusion principle. The method in 
\cite{Hey2014} cannot be applied if $k\ge 4$. Using the inductive approach of the author 
\cite{Tot2002} and the inclusion-exclusion principle, Hu \cite{Hu2014} gave a formula for the 
asymptotic density $C_k$ ($k\ge 3$), with an incomplete proof.

Moree \cite{Mor2005,Mor2014} also formulated as an open problem to compute the density of $k$-tuples  $(n_1,\ldots,n_k)\in \N^k$ such that at least $r$ pairs $(n_i,n_j)$, respectively exactly $r$ pairs are coprime. A correct answer to this problem, but with some incomplete arguments has been given by Hu \cite[Cor.\ 3]{Hu2014}. In fact, Hu \cite[Th.\ 1]{Hu2014} also deduced a related asymptotic formula with remainder term concerning certain arbitrary coprimality conditions. See Theorem \ref{Th_delta}. Arias de Reyna and Heyman \cite{ReyHey2015} used a different method, based on certain properties of arithmetic functions of one variable, and improved the error term by Hu \cite{Hu2014}. 

See Sections \ref{Sect_Prev} and \ref{Sect_Main} for some more details on the above results.

In this paper we use a different approach to study these questions. Applying the convolution method for functions of several variables we first reprove 
Theorem \ref{Th_delta}. To do this we need a careful study of the Dirichlet series of the corresponding characteristic function. See Theorem \ref{Th_Dirichlet_series}. Our result concerning the related asymptotic formula, with the same error term as obtained in \cite{ReyHey2015}, and with new representations of the constant $A_G$ is contained in Theorem \ref{Th_delta_new}. Then we deduce asymptotic formulas with remainder terms on the number of $k$-tuples such that at 
least $r$ pairs $(n_i,n_j)$, respectively exactly $r$ pairs are coprime. See Theorem \ref{Th_exact_least}. In particular, we obtain an asymptotic formula for the function $\beta=\beta_k$, for every $k\ge 2$. See Corollary \ref{Cor_asymt_beta_k}. Our results generalize and refine those by Heyman \cite{Hey2014} and Hu \cite{Hu2014}.

Basic properties of arithmetic functions of $k$ variables are presented in Section \ref{subSect_Arithm_func}. 
Some lemmas related to the principle of inclusion-exclusion, used in the proofs are included in Section 
\ref{subSect_Fuctions}. The proofs of our main results are similar to those in \cite{Tot2016}, and 
are given in Section \ref{Sect_Proofs}. Some numerical examples are presented in Section \ref{Section_Ex}.

\section{Previous results} \label{Sect_Prev}

Heyman \cite{Hey2014} proved the asymptotic formula
\begin{align*}
	\sum_{n_1,n_2,n_3\le x} \beta(n_1,n_2,n_3) 
	= C_3 x^3 + O(x^2 (\log x)^2),
\end{align*}
where the constant $C_3$ is 
\begin{equation} \label{C_3}
	C_3= 1- 3 \prod_p \left(1-\frac1{p^2}\right) + 3 \prod_p \left(1-\frac{2}{p^2}+\frac1{p^3} \right)
	- \prod_p \left(1-\frac{3}{p^2}+\frac2{p^3}\right).
\end{equation}

Hu \cite{Hu2014} gave a formula for the asymptotic density $C_k$ of $k$-tuples with pairwise non-coprime components,
where $k\ge 3$. See \eqref{C_k_0}. It recovers \eqref{C_3} for $k=3$, and for $k=4$ it can be written as
\begin{align} \label{C_4}
	C_4  & = 1 - 6 \prod_p \left(1-\frac1{p^2}\right) + 3 \prod_p \left(1-\frac1{p^2}\right)^2 
	+ 12 \prod_p \left(1-\frac{2}{p^2}+\frac1{p^3} \right) \\ \nonumber 
	&  - 4 \prod_p \left(1-\frac{3}{p^2}+\frac{3}{p^3} -\frac1{p^4}\right)
	- 16 \prod_p \left(1-\frac{3}{p^2}+\frac{2}{p^3} \right) \\  \nonumber
	&  +  15 \prod_p \left(1-\frac{4}{p^2}+\frac{4}{p^3}-\frac1{p^4} \right) 
	- 6 \prod_p \left(1-\frac{5}{p^2}+\frac{6}{p^3} -\frac{2}{p^4} \right) \\ \nonumber
	&  + \prod_p \left(1-\frac{6}{p^2}+\frac{8}{p^3}-\frac3{p^4} \right).
\end{align}

Related to identity \eqref{C_4} we note that there are two typos in \cite{Hu2014}, namely $\prod_p (1-1/p)^2(1-2/p)$ on pages 7 and 8 should be $\prod_p (1-1/p)^2(1+2/p)$.

For a fixed $k\ge 2$ let $V=\{1,2,\ldots,k\}$, let $E$ be an arbitrary subset of the set $\{(i,j): 1\le i< j\le k \}$, 
and let take the coprimality conditions $\gcd(n_i,n_j)=1$ for $(i,j)\in E$. Following Hu \cite{Hu2014}, Arias de Reyna and Heyman 
\cite{ReyHey2015}, it is convenient and suggestive 
to consider the corresponding simple graph $G=(V,E)$, we call it coprimality graph, with set of vertices $V$ and set of edges $E$. 
Therefore, we will use the notation $E\subseteq V^{(2)}:= \{\{i,j\}: 1\le i<j\le k\}$, where the edges of $G$ are denoted 
by $\{i,j\}=\{j,i\}$, and will adopt some related graph terminology. 

Let $\delta_G$ denote the characteristic function attached to the graph $G$, defined by
\begin{equation} \label{def_delta_G}
	\delta_G(n_1,\ldots,n_k)= \begin{cases} 
		1, & \text{ if $\gcd(n_i,n_j)=1$ for every $\{i,j\}\in E$;} \\
		0, & \text{ otherwise},
	\end{cases}    
\end{equation}
and note that if $E=\emptyset$, that is, the graph $G$ has no edges, then $\delta_G(n_1,\ldots,n_k)=1$ for every $(n_1,\ldots,n_k)\in \N^k$.

Furthermore, let $i_m(G)$ be the number of independent sets $S$ of vertices in $G$ (i.e., no two 
vertices of $S$ are adjacent in $G$) of cardinality $m$. Also, for $F\subseteq E$ let $v(F)$ denote the number of 
distinct vertices appearing in $F$.

\begin{theorem} \label{Th_delta} Let $G=(V,E)$ be an arbitrary graph. With the above notation,
	\begin{equation} \label{form_E}
		\sum_{n_1,\ldots,n_k\le x} \delta_G(n_1,\ldots,n_k) 
		= A_G x^k + O(x^{k-1}(\log x)^{\vartheta_G}),
	\end{equation}
	where the constant $A_G$ is given by 
	\begin{align} \label{A_G}
		A_G & = \prod_p \left(\sum_{m=0}^k \frac{i_m(G)}{p^m} \left(1-\frac{1}{p}\right)^{k-m}\right)
		\\
		\label{A_G_version}
		& = \prod_p \left(\sum_{F\subseteq E} \frac{(-1)^{|F|}}{p^{v(F)}} \right),
	\end{align}
	and $\vartheta_G=d_G:=\max_{j\in V} \deg(j)$, denoting the maximum degree of the vertices of $G$.
\end{theorem}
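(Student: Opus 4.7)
The plan is to apply the convolution method for arithmetic functions of several variables, following \cite{Tot2016}. Since $\delta_G$ is $k$-multiplicative (the coprimality conditions factor over primes), write $\delta_G=\mathbf{1}\ast g$, where $\mathbf{1}(n_1,\ldots,n_k)\equiv 1$, $\ast$ is the $k$-variable Dirichlet convolution, and $g$ is the resulting multiplicative function in $k$ variables obtained via Möbius inversion. Interchanging the order of summation yields
\[
\sum_{n_1,\ldots,n_k\le x}\delta_G(n_1,\ldots,n_k)=\sum_{d_1,\ldots,d_k\le x}g(d_1,\ldots,d_k)\prod_{i=1}^k\left\lfloor\frac{x}{d_i}\right\rfloor.
\]
Using the elementary bound $\bigl|\prod_i a_i-\prod_i b_i\bigr|\le\sum_i\prod_{j\ne i}a_j$ with $a_i=x/d_i$, $b_i=\lfloor x/d_i\rfloor$, together with an estimate for the tail of $\sum g/(d_1\cdots d_k)$, reduces matters to two tasks: evaluating $A_G=\sum_{d_1,\ldots,d_k\ge 1}g(d_1,\ldots,d_k)/(d_1\cdots d_k)$ in the two advertised forms, and bounding the partial sum $T_i:=\sum_{d_1,\ldots,d_k\le x}|g(d_1,\ldots,d_k)|/\prod_{j\ne i}d_j$ by $O((\log x)^{\deg(i)})$ for each $i\in V$.

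For the first task I would compute the local Euler factor of $D_g$ at each prime $p$ in two complementary ways, producing the two formulas for $A_G$. On one hand, $\delta_G(p^{a_1},\ldots,p^{a_k})=1$ if and only if the support $S:=\{i:a_i\ge 1\}$ is an independent set of $G$; summing the geometric series over $a_i\ge 1$ for $i\in S$ and then multiplying by the cancelling factors $\prod_i(1-1/p)$ collapses the local factor of $D_g$ at $(1,\ldots,1)$ to $\sum_{m=0}^k i_m(G)p^{-m}(1-1/p)^{k-m}$, giving \eqref{A_G}. On the other hand, writing $\delta_G=\prod_{e\in E}\sum_{d\mid\gcd}\mu(d)$ and restricting to a single prime $p$ (so each $d_e\in\{1,p\}$ is parametrized by a subset $F\subseteq E$), the local factor of $D_{\delta_G}$ becomes $\prod_i(1-p^{-s_i})^{-1}\sum_{F\subseteq E}(-1)^{|F|}p^{-\sum_{i\in V(F)}s_i}$; dividing off the zeta-factors and evaluating at $s_i=1$ yields $\sum_{F\subseteq E}(-1)^{|F|}/p^{v(F)}$, which is \eqref{A_G_version}. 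These are the Euler-factor identities that constitute Theorem~\ref{Th_Dirichlet_series}. Absolute convergence at $(1,\ldots,1)$ is automatic since each local factor of $g$ is of the form $1+O(1/p^2)$ after cancellation of the $1/p$ terms.

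The second task relies on the structural identity $g(p^{\mathbf{1}_I})=\sum_{F\subseteq E:\,V(F)=I}(-1)^{|F|}$, read off the second Euler-product computation, together with the observation that $V(F)$ is never a singleton. These two facts force the local factor of $D_{|g|}$ at $s_i=0$, $s_j=1$ (for $j\ne i$) to take the shape $1+\deg(i)/p+O(1/p^2)$: the $p^0$ contribution collapses to the empty-set term, and the $1/p$ term counts sets $I=\{i,j\}$ with $\{i,j\}\in E$, of which there are exactly $\deg(i)$. A standard Rankin-type estimate for partial sums of multiplicative functions then gives $T_i=O((\log x)^{\deg(i)})$, and maximizing over $i\in V$ yields the overall error $O(x^{k-1}(\log x)^{d_G})$.

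The main obstacle I anticipate is the delicate local-factor computation in this last step: one must verify that both the would-be $p^0$ contributions coming from $F$ with $V(F)\subseteq\{i\}$ and the $1/p$ contributions from $F$ with $|V(F)\setminus\{i\}|=1$ are pinned down exactly by the combinatorics of the graph at the vertex $i$, so that the log-exponent is precisely $\deg(i)$ and no larger. Once this structural bound is in place, the $k$-variable convolution machinery of Section~\ref{subSect_Arithm_func} delivers the theorem routinely.
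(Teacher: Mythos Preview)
Your overall strategy---write $\delta_G=\mathbf{1}\ast g$ with $g=\mu\ast\delta_G$, identify the local factor of $|g|$ at $s_i=0$, $s_j=1$ ($j\ne i$) as $1+\deg(i)/p+O(1/p^2)$, and invoke Mertens---is exactly the paper's approach (see the proof of Theorem~\ref{Th_delta_new}). Your derivations of \eqref{A_G} and \eqref{A_G_version} via independent supports and via the edge-wise M\"obius expansion are somewhat more direct than the paper's route through vertex covers in Theorem~\ref{Th_Dirichlet_series}, but they yield the same Euler factors.

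The one place where your plan is too optimistic is the phrase ``together with an estimate for the tail of $\sum g/(d_1\cdots d_k)$''. Your telescoping bound on $\prod\lfloor x/d_i\rfloor$ and the $T_i$ estimate give
\[
\sum_{n_1,\ldots,n_k\le x}\delta_G = x^k\!\!\sum_{d_1,\ldots,d_k\le x}\frac{g(d_1,\ldots,d_k)}{d_1\cdots d_k}+O\bigl(x^{k-1}(\log x)^{d_G}\bigr),
\]
but to replace the truncated sum by $A_G$ you need the tail to be $O(x^{-1}(\log x)^{d_G})$, not merely $o(1)$. Absolute convergence alone does not give this rate. In the paper this is handled by a separate case analysis (Cases~I--III in the proof of Theorem~\ref{Th_delta_new}): for a subset $I$ of indices with $d_i>x$, the cases $|I|\ge 3$, $|I|=1$, and $|I|=2$ are treated differently, and the $|I|=2$ case in particular requires a further splitting according to whether $d_{i_0}>x^{3/2}$. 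The structural facts you have already isolated about $g$ (support on squarefree tuples, vanishing when the support is a singleton) are precisely what drive these estimates, so the gap is fillable, but it is not automatic from the $T_i$ bound.
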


Here $A_G$ is representing the asymptotic density of $k$-tuples $(n_1,\ldots,n_k)\in \N^k$ such that $\gcd(n_i,n_j)=1$ for 
$\{i,j\}\in E$. Theorem \ref{Th_delta} was first proved by Hu \cite{Hu2014} with the weaker exponent $\vartheta_G = k-1$ 
for every subset $E$ and with identity \eqref{A_G} for the constant $A_G$. Arias de Reyna and Heyman \cite{ReyHey2015} deduced Theorem \ref{Th_delta} 
by a different method, with the given exponent $\vartheta_G=d_G$ and identity \eqref{A_G_version}
for the constant $A_G$. 

Note that if we have the complete coprimality graph, namely if $E=V^{(2)}$, then $\delta_G$ is the characteristic function 
of the set of $k$-tuples with pairwise coprime components (see \eqref{vartheta}), and \eqref{form_E} recovers formula \eqref{form}.

\section{Preliminaries} \label{Sect_Prelim}

\subsection{Arithmetic functions of $k$ variables} 
\label{subSect_Arithm_func}

The Dirichlet convolution of the functions $f,g:\N^k\to \C$ is defined by
\begin{equation} \label{def_convo}
	(f*g)(n_1,\ldots,n_k)= \sum_{d_1\mid n_1, \ldots, d_k\mid n_k}
	f(d_1,\ldots,d_k) g(n_1/d_1, \ldots, n_k/d_k).
\end{equation}

Let $\mu=\mu_k:\N^k\to \{-1,0,1\}$ denote the M\"obius function of $k$ variables, defined as the inverse of the constant $1$ 
function under convolution \eqref{def_convo}. We have $\mu(n_1,\ldots,n_k)=\mu(n_1)\cdots \mu(n_k)$ for every $n_1,\ldots,n_k\in \N$, 
which recovers for $k=1$ the classical (one variable) M\"obius function. 

The Dirichlet series of a function $f:\N^k\to \C$ is given by
\begin{align*}
	D(f;s_1,\ldots,s_k):= \sum_{n_1,\ldots,n_k=1}^{\infty}
	\frac{f(n_1,\ldots,n_k)}{n_1^{s_1}\cdots n_k^{s_k}}.
\end{align*} 

If $D(f;s_1,\ldots,s_k)$ and $D(g;s_1,\ldots,s_k)$ are
absolutely convergent, where $s_1,\ldots,s_k\in \C$, then
$D(f*g;s_1,\ldots,s_k)$ is also absolutely convergent and
\begin{align*}
	D(f*g;s_1,\ldots,s_k) = D(f;s_1,\ldots,s_k) D(g;s_1,\ldots,s_k).
\end{align*}

We recall that a nonzero arithmetic function of $k$ variables $f:\N^k\to \C$ is said to be multiplicative if
\begin{align*}
	f(m_1n_1,\ldots,m_kn_k)= f(m_1,\ldots,m_k) f(n_1,\ldots,n_k)
\end{align*}
holds for all $m_1,n_1\ldots,m_k,n_k \in \N$ such that $\gcd(m_1\cdots m_k,n_1\cdots n_k)=1$.
If $f$ is multiplicative, then it is determined by the values
$f(p^{\nu_1},\ldots,p^{\nu_k})$, where $p$ is prime and
$\nu_1,\ldots,\nu_k\in \N \cup \{0\}$. More exactly, $f(1,\ldots,1)=1$ and
for all $n_1,\ldots,n_k\in \N$,
\begin{align*}
	f(n_1,\ldots,n_k)= \prod_p f(p^{\nu_p(n_1)}, \ldots,p^{\nu_p(n_k)}),
\end{align*}
where we use the notation $n=\prod_p p^{\nu_p(n)}$ for the prime power factorization of $n\in \N$, the product being over the
primes $p$ and all but a finite number of the exponents $\nu_p(n)$ are zero. 
Examples of multiplicative functions of $k$ variables are the GCD and LCM functions $\gcd(n_1,\ldots,n_k)$, $\lcm(n_1,\ldots,n_k)$
and the characteristic functions
\begin{align} \nonumber
	\varrho(n_1,\ldots,n_k) & = 
	\begin{cases}
		1, & \text{if $\gcd(n_1,\ldots,n_k)=1$;}\\
		0, & \text{otherwise,} 
	\end{cases} 
	\\ \label{vartheta}
	\vartheta(n_1,\ldots,n_k) & = 
	\begin{cases} 
		1, & \text{if $\gcd(n_i,n_j)=1$ for every $1\le i< j\le k$;}\\
		0, & \text{otherwise.}
	\end{cases}
\end{align}

If the function $f$ is multiplicative, then its Dirichlet series can be expanded into a (formal)
Euler product, that is,
\begin{equation} \label{Euler_product}
	D(f;s_1,\ldots,s_k)=  \prod_p \sum_{\nu_1,\ldots,\nu_k=0}^{\infty}
	\frac{f(p^{\nu_1},\ldots, p^{\nu_k})}{p^{\nu_1s_1+\cdots +\nu_k s_k}},
\end{equation}
the product being over the primes $p$. More exactly, for $f$ multiplicative, the series
$D(f;s_1,\ldots$, $s_k)$ with $s_1,\ldots,s_k\in \C$ is absolutely convergent if and only if
\begin{align*}
	\sum_p \sum_{\substack{\nu_1,\ldots,\nu_k=0\\ \nu_1+\cdots +\nu_k \ge 1}}^{\infty}
	\frac{|f(p^{\nu_1},\ldots, p^{\nu_k})|}{p^{\nu_1 \Re s_1+\cdots +\nu_k
			\Re s_k}} < \infty
\end{align*}
and in this case equality \eqref{Euler_product} holds. 

The mean value of a function $f:\N^k \rightarrow \C$ is
\begin{align*}
	M(f): = \lim_{x_1, \ldots, x_k\to \infty} \frac1{x_1\cdots x_k}
	\sum_{n_1\le x_1,\ldots, n_k\le x_k} f(n_1,\ldots,n_k),
\end{align*}
provided that this limit exists. As a generalization of Wintner's theorem (valid in the one variable
case), Ushiroya \cite[Th.\ 1]{Ush2012} proved the next result.

\begin{theorem} \label{Th_Wintner_gen}
	If $f$ is a function of $k$ variables,
	not necessary multiplicative, such that
	\begin{align*}
		\sum_{n_1,\ldots,n_k=1}^{\infty} \frac{|(\mu*f)(n_1,\ldots,n_k)|}{n_1\cdots n_k} < \infty,
	\end{align*}
	then the mean value\index{mean value} $M(f)$ exists, and
	\begin{align*}
		M(f)= \sum_{n_1,\ldots,n_k=1}^{\infty} \frac{(\mu*f)(n_1,\ldots,n_k)}{n_1\cdots n_k}.
	\end{align*}
\end{theorem}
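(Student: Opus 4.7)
The plan is to mimic the classical one-variable Wintner argument in the $k$-variable setting: convert the averaged sum into a Dirichlet-type series over $(d_1,\ldots,d_k)$ via M\"obius inversion, and then pass to the limit termwise.

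Set $g:=\mu*f$. Since $\mu$ is by definition the inverse of the constant function $1:\N^k\to\C$ under the convolution \eqref{def_convo}, one has $f=1*g$, that is,
\begin{equation*}
f(n_1,\ldots,n_k)=\sum_{d_1\mid n_1,\ldots,d_k\mid n_k} g(d_1,\ldots,d_k).
\end{equation*}
Substituting this into the mean-value sum and swapping the order of summation (legitimate since every inner sum is finite) yields
\begin{equation*}
\sum_{n_1\le x_1,\ldots,n_k\le x_k} f(n_1,\ldots,n_k)
= \sum_{d_1\le x_1,\ldots,d_k\le x_k} g(d_1,\ldots,d_k)\prod_{i=1}^k \left\lfloor\frac{x_i}{d_i}\right\rfloor.
\end{equation*}
Dividing by $x_1\cdots x_k$ and rewriting each floor term as $\lfloor x_i/d_i\rfloor = (x_i/d_i)\cdot\bigl(d_i\lfloor x_i/d_i\rfloor/x_i\bigr)$ converts the right-hand side into
\begin{equation*}
\sum_{d_1\le x_1,\ldots,d_k\le x_k}\frac{g(d_1,\ldots,d_k)}{d_1\cdots d_k}\prod_{i=1}^k \frac{d_i\lfloor x_i/d_i\rfloor}{x_i}.
\end{equation*}

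To finish, I would invoke dominated convergence on the counting measure of $\N^k$ (equivalently the Weierstrass $M$-test). For every fixed tuple $(d_1,\ldots,d_k)$ each factor $d_i\lfloor x_i/d_i\rfloor/x_i$ tends to $1$ as $x_i\to\infty$ and lies in $[0,1]$ for every $x_i>0$ (it vanishes when $d_i>x_i$); consequently each summand is dominated in absolute value by $|g(d_1,\ldots,d_k)|/(d_1\cdots d_k)$, whose total sum is finite by hypothesis. Passing to the limit termwise then gives
\begin{equation*}
M(f)=\sum_{d_1,\ldots,d_k=1}^{\infty}\frac{g(d_1,\ldots,d_k)}{d_1\cdots d_k}
=\sum_{n_1,\ldots,n_k=1}^{\infty}\frac{(\mu*f)(n_1,\ldots,n_k)}{n_1\cdots n_k},
\end{equation*}
which is the claimed formula. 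The only real point needing attention is the uniform bound $0\le d_i\lfloor x_i/d_i\rfloor/x_i\le 1$ valid for all $x_i,d_i$, including the boundary case $d_i>x_i$; once this is in hand, the absolute convergence of the dominating series does the rest.
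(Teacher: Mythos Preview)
Your argument is correct: the M\"obius inversion $f=1*g$, the swap of summation, the uniform bound $0\le d_i\lfloor x_i/d_i\rfloor/x_i\le 1$, and the dominated-convergence step all go through exactly as you say, and this is the standard extension of Wintner's one-variable proof to $\N^k$. Note, however, that the paper does not give its own proof of this theorem at all---it is stated as a known result and attributed to Ushiroya \cite[Th.\ 1]{Ush2012}, so there is no in-paper argument to compare your approach against.
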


For multiplicative functions the above result can be formulated as follows. See \cite[Prop.\ 19]{Tot2014}, \cite[Th.\ 4]{Ush2012}. 

\begin{theorem}
	Let $f:\N^k\to \C$ be a multiplicative function. Assume that
	\begin{align*}
		\sum_p \sum_{\substack{\nu_1,\ldots,\nu_k=0\\ \nu_1+\cdots +\nu_k \ge 1}}^{\infty}
		\frac{|(\mu*f)(p^{\nu_1},\ldots,p^{\nu_k})|}{p^{\nu_1+\cdots +\nu_k}} < \infty.
	\end{align*}
	
	Then the mean value $M(f)$ exists, and
	\begin{align*}
		M(f)= \prod_p \left(1-\frac1{p}\right)^k \sum_{\nu_1,\ldots,\nu_k=0}^{\infty}
		\frac{f(p^{\nu_1},\ldots,p^{\nu_k})}{p^{\nu_1+\cdots +\nu_k}}.
	\end{align*} 
\end{theorem}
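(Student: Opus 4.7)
The plan is to apply the preceding Wintner-type theorem to $f$ and then repackage the resulting sum into an Euler product using the multiplicativity of $\mu \ast f$.

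First I would observe that since $\mu$ and $f$ are both multiplicative functions of $k$ variables, their Dirichlet convolution $\mu \ast f$ is also multiplicative. Together with the hypothesis
\[
\sum_p \sum_{\substack{\nu_1,\ldots,\nu_k\ge 0 \\ \nu_1+\cdots+\nu_k\ge 1}} \frac{|(\mu\ast f)(p^{\nu_1},\ldots,p^{\nu_k})|}{p^{\nu_1+\cdots+\nu_k}} < \infty,
\]
the Euler product criterion recalled after \eqref{Euler_product} shows that the Dirichlet series $D(\mu\ast f; s_1,\ldots,s_k)$ is absolutely convergent at $s_1=\cdots=s_k=1$. In particular,
\[
\sum_{n_1,\ldots,n_k=1}^{\infty} \frac{|(\mu\ast f)(n_1,\ldots,n_k)|}{n_1\cdots n_k} < \infty,
\]
so Theorem \ref{Th_Wintner_gen} applies and gives
\[
M(f) = \sum_{n_1,\ldots,n_k=1}^{\infty} \frac{(\mu\ast f)(n_1,\ldots,n_k)}{n_1\cdots n_k} = D(\mu\ast f;1,\ldots,1).
\]

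Next I would expand this series as an Euler product. By multiplicativity of $\mu\ast f$ and absolute convergence, \eqref{Euler_product} yields
\[
D(\mu\ast f;1,\ldots,1) = \prod_p \sum_{\nu_1,\ldots,\nu_k=0}^{\infty} \frac{(\mu\ast f)(p^{\nu_1},\ldots,p^{\nu_k})}{p^{\nu_1+\cdots+\nu_k}}.
\]
To simplify the local factor I would use the identity $D(\mu\ast f) = D(\mu)\cdot D(f)$, applied at the level of each Euler factor. The local factor of $D(\mu;s_1,\ldots,s_k)$ at $p$ factors cleanly: since $\mu(p^{\nu_1},\ldots,p^{\nu_k})=\mu(p^{\nu_1})\cdots\mu(p^{\nu_k})$ and $\mu(p^\nu)$ vanishes for $\nu\ge 2$, it equals $\prod_{i=1}^{k}(1-p^{-s_i})$. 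Therefore the local factor of $D(\mu\ast f)$ at $p$ and at $s_1=\cdots=s_k=1$ is exactly
\[
\left(1-\frac{1}{p}\right)^k \sum_{\nu_1,\ldots,\nu_k=0}^{\infty} \frac{f(p^{\nu_1},\ldots,p^{\nu_k})}{p^{\nu_1+\cdots+\nu_k}},
\]
which is the asserted formula.

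The only genuine subtlety is the interchange hidden in the factorization $D(\mu\ast f) = D(\mu) D(f)$ at $(1,\ldots,1)$, which requires absolute convergence of both factors at this point; this is what I expect to be the main obstacle to verify cleanly. Absolute convergence of $D(\mu)$ at $(1,\ldots,1)$ is not available (it would require $\Re s_i > 1$), so I cannot just multiply the two global Dirichlet series directly. Instead I would work locally: for each prime $p$ separately the two local series at $(1,\ldots,1)$ converge absolutely (the $\mu$-factor is a finite sum of four terms, and the $f$-factor converges absolutely by the hypothesis combined with the identity $f = 1 \ast (\mu\ast f)$ together with $\sum_\nu 1/p^\nu = p/(p-1)$), so their product is the local factor of $\mu \ast f$ by an elementary Cauchy-product argument on a single prime. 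Taking the product over $p$ and comparing with the global Euler product of $D(\mu\ast f)$ obtained above completes the proof.
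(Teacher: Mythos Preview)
The paper does not give its own proof of this statement; it is quoted from \cite[Prop.\ 19]{Tot2014} and \cite[Th.\ 4]{Ush2012} as a known reformulation of Theorem~\ref{Th_Wintner_gen} for multiplicative $f$. So there is nothing in the paper to compare against directly.

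Your argument is correct and is essentially the standard one: first use the Euler-product convergence criterion to see that the hypothesis forces absolute convergence of $D(\mu\ast f;1,\ldots,1)$, then invoke Theorem~\ref{Th_Wintner_gen}, and finally factor each local Euler factor of $\mu\ast f$ as the product of the local factors of $\mu$ and of $f$. You were right to flag that the global factorization $D(\mu\ast f)=D(\mu)D(f)$ is unavailable at $s_1=\cdots=s_k=1$, and your fix (work prime by prime, where the $\mu$-factor is a polynomial and the $f$-factor is shown absolutely convergent via $f=1\ast(\mu\ast f)$) is exactly what is needed. One small slip: the local $\mu$-factor is a finite sum of $2^k$ terms, not four; this does not affect the argument, since all you use is finiteness.
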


See, e.g., Delange \cite{Del1969} and the survey by the author \cite{Tot2014} for these and 
some other related results on arithmetic functions of several variables. If $k=1$, i.e., in the case of functions 
of a single variable we recover some familiar properties.

\subsection{The functions $\delta_G$ and $\beta$}
\label{subSect_Fuctions}

Consider the function $\delta_G$ defined by \eqref{def_delta_G}.

\begin{lemma} \label{Lemma_multipl}  For every subset $E$, the function $\delta_G$  is multiplicative.
\end{lemma}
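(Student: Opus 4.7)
The plan is to verify the definition of multiplicativity directly from the definition of $\delta_G$. I need to show that for all $m_1,n_1,\ldots,m_k,n_k\in\N$ with $\gcd(m_1\cdots m_k,\, n_1\cdots n_k)=1$, one has $\delta_G(m_1n_1,\ldots,m_kn_k)=\delta_G(m_1,\ldots,m_k)\,\delta_G(n_1,\ldots,n_k)$, and also $\delta_G(1,\ldots,1)=1$. The latter is immediate since the empty/trivial gcd conditions are all satisfied.

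The key local identity I would use is that, under the global coprimality assumption $\gcd(m_1\cdots m_k,n_1\cdots n_k)=1$, one has $\gcd(m_i,n_j)=1$ for every pair $i,j$ (including $i=j$). Therefore, for any edge $\{i,j\}\in E$,
\[
\gcd(m_in_i,\,m_jn_j)=\gcd(m_i,m_j)\cdot\gcd(n_i,n_j),
\]
because the cross factors $\gcd(m_i,n_j)$ and $\gcd(n_i,m_j)$ are trivial. I would note this is the whole arithmetic content of the lemma.

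From this identity, the condition "$\gcd(m_in_i,m_jn_j)=1$ for every $\{i,j\}\in E$" is equivalent to the conjunction of "$\gcd(m_i,m_j)=1$ for every $\{i,j\}\in E$" and "$\gcd(n_i,n_j)=1$ for every $\{i,j\}\in E$". Translating this equivalence into the indicator function $\delta_G$ yields $\delta_G(m_1n_1,\ldots,m_kn_k)=\delta_G(m_1,\ldots,m_k)\,\delta_G(n_1,\ldots,n_k)$, since the product of two $\{0,1\}$-valued functions is $1$ iff both factors are $1$. This finishes the proof.

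I do not anticipate any serious obstacle: the argument is purely formal once one observes that the cross-gcds vanish. The only mild care is to remark that the statement holds uniformly in $E$, so in particular for $E=\emptyset$ (where $\delta_G\equiv 1$ is trivially multiplicative) and for $E=V^{(2)}$ (recovering the known multiplicativity of $\vartheta$ from \eqref{vartheta}).
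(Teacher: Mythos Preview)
Your proposal is correct and follows essentially the same direct verification as the paper: both use that under the hypothesis $\gcd(m_1\cdots m_k,\,n_1\cdots n_k)=1$ one has $\gcd(m_in_i,m_jn_j)=\gcd(m_i,m_j)\gcd(n_i,n_j)$, so the indicator of the conjunction of edge-wise coprimality conditions factors as the product of the two indicators. Your write-up is slightly more explicit about why the cross-gcds $\gcd(m_i,n_j)$ vanish, but the argument is the same.
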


\begin{proof}
	This is a consequence of the fact that the gcd function $\gcd(m,n)$ is multiplicative, viewed as a function
	of two variables.
	To give a direct proof, let $m_1,n_1,\ldots,m_k,n_k\in \N$ such that $\gcd(m_1\cdots m_k, n_1\cdots n_k)=1$.
	Then we have
	\begin{align*}
		\delta_G(m_1n_1,\ldots,m_kn_k) & =  \begin{cases} 
			1, & \text{ if $\gcd(m_in_i,m_jn_j)=1$ for all $\{i,j\}\in E$;} \\
			0, & \text{ otherwise;}
		\end{cases}   \\
		& = \begin{cases}  1, & \text{ if $\gcd(m_i,m_j)\gcd(n_i,n_j)=1$ for all $\{i,j\}\in E$;} \\
			0, & \text{ otherwise;}
		\end{cases}    \\
		& = \begin{cases}  1, & \text{ if $\gcd(m_i,m_j)=1$ for all $\{i,j\}\in E$;} \\
			0, & \text{ otherwise;}
		\end{cases} \\    
		& \quad \times
		\begin{cases}  1, & \text{ if $\gcd(n_i,n_j)=1$ for all $\{i,j\}\in E$;} \\
			0, & \text{ otherwise;}
		\end{cases} \\
		& = \delta_G(m_1,\ldots, m_k) \delta_G(n_1,\ldots,n_k),
	\end{align*}
	finishing the proof.
\end{proof}

The function $\beta$ given by \eqref{def_beta} is not multiplicative. However, by the inclusion-exclusion principle
it can be written as an alternating sum of certain multiplicative functions $\delta_G$.

More generally, for $r\ge 0$ we define the functions $\beta_r=\beta_{k,r}$ and $\beta'_r=\beta'_{k,r}$ by
\begin{align} \label{def_beta_exact}
	\beta_r(n_1,\ldots,n_k) & = \begin{cases} 
		1, & \text{ if exactly $r$ pairs $(n_i,n_j)$ with $1\le i<j\le k$ are coprime}; \\
		0, & \text{ otherwise},
	\end{cases} \\
	\label{def_beta_least}
	\beta'_r(n_1,\ldots,n_k) & = \begin{cases} 
		1, & \text{ if at least $r$ pairs $(n_i,n_j)$ with $1\le i<j\le k$ are coprime}; \\
		0, & \text{ otherwise}.
	\end{cases}    
\end{align}

If $r=0$, then $\beta_0=\beta$.

\begin{lemma} Let $k\ge 2$ and $r\ge 0$. For every $n_1,\ldots,n_k\in \N$,
	\begin{align} \label{beta_r}
		\beta_r(n_1,\ldots,n_k) & = \sum_{j=r}^{k(k-1)/2} (-1)^{j-r} \binom{j}{r} \sum_{\substack{E\subseteq V^{(2)} \\ |E|=j}} \delta_G(n_1,\ldots,n_k),
		\\ 
		\label{beta}
		\beta(n_1,\ldots,n_k) & = \sum_{j=0}^{k(k-1)/2} (-1)^j \sum_{\substack{E\subseteq V^{(2)} \\ |E|=j}} 
		\delta_G(n_1,\ldots,n_k).
	\end{align}
\end{lemma}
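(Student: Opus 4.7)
\medskip

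\noindent\textbf{Proof proposal.} The plan is to fix an arbitrary tuple $(n_1,\ldots,n_k)\in\N^k$ and reduce both identities to a purely combinatorial statement about subsets of a finite set. Introduce the set of actually coprime pairs
\[
C=C(n_1,\ldots,n_k)=\bigl\{\{i,j\}\in V^{(2)} : \gcd(n_i,n_j)=1\bigr\},
\]
and let $c=|C|$. From the definition \eqref{def_delta_G} of $\delta_G$, evaluated at our fixed tuple, $\delta_G(n_1,\ldots,n_k)=1$ precisely when $E\subseteq C$. Hence for each $j$,
\[
\sum_{\substack{E\subseteq V^{(2)}\\ |E|=j}}\delta_G(n_1,\ldots,n_k)=\#\{E\subseteq C:|E|=j\}=\binom{c}{j}.
\]

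Next, observe that $\beta_r(n_1,\ldots,n_k)=1$ exactly when $c=r$, so \eqref{beta_r} reduces to verifying the purely combinatorial identity
\[
\mathds{1}_{[c=r]}=\sum_{j=r}^{k(k-1)/2}(-1)^{j-r}\binom{j}{r}\binom{c}{j},
\]
for every integer $c$ with $0\le c\le k(k-1)/2$. I would prove this using the standard trinomial rearrangement $\binom{j}{r}\binom{c}{j}=\binom{c}{r}\binom{c-r}{j-r}$, which converts the right-hand side into
\[
\binom{c}{r}\sum_{i=0}^{k(k-1)/2-r}(-1)^i\binom{c-r}{i}.
\]
If $c<r$, then $\binom{c}{r}=0$; if $c=r$, only the $i=0$ term survives and yields $\binom{c}{r}=1$; if $c>r$, the tail sum $\sum_{i\ge 0}(-1)^i\binom{c-r}{i}$ vanishes by the binomial theorem (terms with $i>c-r$ are already zero). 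This gives the indicator $\mathds{1}_{[c=r]}$, which is exactly $\beta_r(n_1,\ldots,n_k)$, proving \eqref{beta_r}.

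Finally, \eqref{beta} is the special case $r=0$ of \eqref{beta_r}, since $\beta_0=\beta$ and $\binom{j}{0}=1$. I do not anticipate any real obstacle here; the only small care needed is to see that nothing depends on whether $c$ attains the extreme value $k(k-1)/2$ (the upper summation bound is harmless, as the extra terms all vanish). The argument is a clean application of inclusion–exclusion once one recognizes that summing $\delta_G$ over edge sets of a given cardinality merely counts subsets of $C$.
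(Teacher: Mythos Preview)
Your proof is correct and follows essentially the same route as the paper's: both fix the tuple, replace the inner sum over $E$ of size $j$ by $\binom{c}{j}$ (the paper writes $t$ for your $c$), and then collapse the remaining alternating sum via the trinomial identity $\binom{j}{r}\binom{c}{j}=\binom{c}{r}\binom{c-r}{j-r}$ and the binomial theorem. The only cosmetic difference is that you make the observation ``$\delta_G=1$ iff $E\subseteq C$'' explicit, whereas the paper leaves this implicit.
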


\begin{proof} Given $n_1,\ldots,n_k\in \N$, assume that for $1\le i<j\le k$ condition $\gcd(n_i,n_j)=1$ holds for $t$ times, where $0\le t \le k(k-1)/2$. Then the right hand side of \eqref{beta_r} is
	\begin{align*} 
		N_r:=\sum_{j=r}^t (-1)^{j-r} \binom{j}{r} \binom{t}{j}.
	\end{align*}
	
	If $t<r$, then this is the empty sum, and $N_r=0=\beta_r(n_1,\ldots,n_k)$.
	If $t\ge r$, then
	\begin{align*} 
		N_r = \binom{t}{r} \sum_{j=r}^t (-1)^{j-r} \binom{t-r}{j-r} = \binom{t}{r} \sum_{m=0}^{t-r} (-1)^m \binom{t-r}{m} 
		= \begin{cases} 1, & \text{ if $t=r$};\\ 0, & \text{ if $t>r$}, 
		\end{cases} 
	\end{align*}
	which is exactly $\beta_r(n_1,\ldots,n_k)$.
	
	In the case $r=0$ we obtain identity \eqref{beta}.
\end{proof}

\begin{lemma} Let $k\ge 2$ and $r\ge 1$. For every $n_1,\ldots,n_k\in \N$,
	\begin{equation} \label{beta_least_r}
		\beta'_r(n_1,\ldots,n_k)= \sum_{j=r}^{k(k-1)/2} (-1)^{j-r} \binom{j-1}{r-1} \sum_{\substack{E\subseteq V^{(2)} \\ |E|=j}} \delta_G(n_1,\ldots,n_k).
	\end{equation}
\end{lemma}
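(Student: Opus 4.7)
The plan is to derive \eqref{beta_least_r} directly from the already-established identity \eqref{beta_r}, rather than repeating a combinatorial count from scratch. The starting observation is that the events ``exactly $s$ pairs coprime'' are mutually exclusive and partition the event ``at least $r$ pairs coprime'' as $s$ ranges over $r, r+1, \ldots, k(k-1)/2$, so
\[
\beta'_r(n_1,\ldots,n_k) \;=\; \sum_{s=r}^{k(k-1)/2} \beta_s(n_1,\ldots,n_k).
\]

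Next I substitute the representation \eqref{beta_r} for each $\beta_s$ and interchange the order of summation over $s$ and $j$. Since the inner range of $j$ is $s \le j \le k(k-1)/2$, after swapping the coefficient attached to $\sum_{|E|=j}\delta_G(n_1,\ldots,n_k)$ becomes
\[
c_j \;=\; \sum_{s=r}^{j} (-1)^{j-s}\binom{j}{s}.
\]
The one nontrivial step is to evaluate $c_j$ and match it to the claimed coefficient $(-1)^{j-r}\binom{j-1}{r-1}$. Reindexing by $m = j - s$, I need the classical partial-sum identity
\[
\sum_{m=0}^{j-r}(-1)^m\binom{j}{m} \;=\; (-1)^{j-r}\binom{j-1}{j-r},
\]
which follows by a short induction on the upper limit using Pascal's rule $\binom{j}{m} = \binom{j-1}{m} + \binom{j-1}{m-1}$ to make the sum telescope. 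Combined with $\binom{j-1}{j-r} = \binom{j-1}{r-1}$, this yields $c_j = (-1)^{j-r}\binom{j-1}{r-1}$ and concludes the proof.

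I do not anticipate a real obstacle; the only delicate point is the partial-sum identity above, which is standard. A fully self-contained alternative, if a direct argument is preferred, would be to fix $n_1,\ldots,n_k$, let $t$ be the exact number of coprime pairs, and verify that $\sum_{j=r}^{t}(-1)^{j-r}\binom{j-1}{r-1}\binom{t}{j}$ equals $1$ when $t\ge r$ and $0$ when $t<r$ (the latter being an empty sum). The case $t\ge r$ can be handled by induction on $t$: the base case $t=r$ reduces to the single term $j=r$ giving $1$, while the inductive step splits $\binom{t+1}{j} = \binom{t}{j}+\binom{t}{j-1}$ and invokes the previous lemma with parameter $r-1$ to see that the second piece vanishes for $t\ge r$.
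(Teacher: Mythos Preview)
Your proposal is correct and follows essentially the same route as the paper: write $\beta'_r=\sum_{t\ge r}\beta_t$, substitute \eqref{beta_r}, swap the sums over $t$ and $j$, and evaluate the resulting inner binomial sum. The paper simply asserts that $\sum_{t=r}^{j}(-1)^{t}\binom{j}{t}=(-1)^{r}\binom{j-1}{r-1}$, whereas you supply the extra step of reindexing to the alternating partial sum $\sum_{m=0}^{j-r}(-1)^m\binom{j}{m}$ and justifying it via Pascal's rule, which is a welcome bit of detail.
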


\begin{proof} We have by using \eqref{beta_r}, 
	\begin{align*} 
		\beta'_r(n_1,\ldots,n_k) & = \sum_{t=r}^{k(k-1)/2} \beta_t(n_1,\ldots,n_k) \\
		& = \sum_{t=r}^{k(k-1)/2} \, \sum_{j=t}^{k(k-1)/2} (-1)^{j-t} \binom{j}{t} \sum_{\substack{E\subseteq V^{(2)} \\ |E|=j}} \delta_G(n_1,\ldots,n_k) \\
		& = \sum_{j=r}^{k(k-1/2} (-1)^j \sum_{\substack{E\subseteq V^{(2)} \\ |E|=j}} \delta_G(n_1,\ldots,n_k) \sum_{t=r}^j (-1)^t \binom{j}{t}, 
	\end{align*}
	where the inner sum is $(-1)^r\binom{j-1}{r-1}$, finishing the proof.
\end{proof}

The above identities are similar to some known generalizations of the principle of 
in\-clu\-sion-exclusion. See, e.g., the books by Aigner \cite[Sect.\ 5.1]{Aig2007} and Stanley \cite[Ch.\ 2]{Sta1997}. 

\section{Main results} \label{Sect_Main}

Given a graph $G=(V,E)$, the asymptotic density $A_G$ of the of $k$-tuples $(n_1,\ldots,n_k)\in \N^k$ such that $\gcd(n_i,n_j)=1$ for $\{i,j\}\in E$ is the mean value of the characteristic function $\delta_G$ defined by \eqref{def_delta_G}. According to Theorem \ref{Th_Wintner_gen}, $A_G=D(\mu*\delta_G,1,\ldots,1)$, provided that 
this series is absolutely convergent. We show this by a careful study of the Dirichlet series of the function 
$\delta_G$.

To formulate our results we need the following additional notation. For a graph $G=(V,E)$ let $I$ be the set of non-isolated vertices of $G$, and 
$J$ be a (minimum) vertex cover of $G$, that is, a set of vertices that includes at least one endpoint of every edge (of smallest possible size).
The notation $\sum_{L\subseteq J}^{'}$ will mean the sum over independent subsets $L$ of $J$ (no two 
vertices of $L$ are adjacent in $G$). Also, let $N(j)$ denote the neighbourhood of a vertex $j$, and for a subset $L$ of $V$ let $N(L)= \cup_{j\in L} N(j)$.

\begin{theorem} \label{Th_Dirichlet_series} Let $k\ge 2$ and $G=(V,E)$ be an arbitrary graph. 
	Then, with the above notation,
	\begin{align*}
		\sum_{n_1,\ldots,n_k=1}^{\infty} \frac{\delta_G(n_1,\ldots,n_k)}{n_1^{s_1}\cdots n_k^{s_k}}
		=\zeta(s_1)\cdots \zeta(s_k) D'_G(s_1,\ldots,s_k), 
	\end{align*}
	where 
	\begin{align} \label{delta_Euler_prod}
		D'_G(s_1,\ldots,s_k) & =  \prod_p \Bigg( \sideset{}{'} \sum_{L\subseteq J} \prod_{\ell \in L} \frac1{p^{s_{\ell}}}  \prod_{i\in (J\setminus L) \cup 
			(N(L)\setminus J)} \left(1-\frac1{p^{s_i}}\right)\Bigg) \\
		\nonumber
		& =  \prod_p \Bigg(1 - \sum_{\{i,j\}\in E} \frac1{p^{s_i+s_j}} + \sum_{j=3}^{|I|} \sum_{\substack{i_1,\ldots,i_j\in I\\   i_1<\cdots <i_j}} 
		\frac{c(i_1,\ldots,i_j)}{p^{s_{i_1}+\cdots +s_{i_j}}}\Bigg),
	\end{align} 
	where $c(i_1,\ldots,i_j)$ are some integers, depending on $i_1,\ldots,i_j$, but not on $p$.
	
	Furthermore, $D'(s_1,\ldots,s_k)$ with $s_1,\ldots,s_k\in \C$ is absolutely convergent provided that $\Re (s_{i_1}+\cdots +s_{i_j})>1$ for every $i_1,\ldots,i_j\in I$ with $i_1< \cdots < i_j$, $2\le j\le |I|$.
\end{theorem}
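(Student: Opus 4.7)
The plan is to use multiplicativity of $\delta_G$ (established in Lemma~\ref{Lemma_multipl}) to express $D(\delta_G;s_1,\ldots,s_k)$ as an Euler product, extract $\zeta(s_1)\cdots\zeta(s_k)$, and then massage the local factors into the two stated forms. First I would evaluate $\delta_G$ on prime powers: $\delta_G(p^{\nu_1},\ldots,p^{\nu_k})=1$ iff $\min(\nu_i,\nu_j)=0$ for every $\{i,j\}\in E$, equivalently iff the support $S:=\{i\in V:\nu_i\ge 1\}$ is an independent set of $G$. Setting $X_i:=1/p^{s_i}$ and summing the geometric series $\sum_{\nu\ge 1}X_i^\nu=X_i/(1-X_i)$ on each coordinate of $S$ (with factor $1$ off $S$), the local factor of $D(\delta_G;s_1,\ldots,s_k)$ becomes $L_p=\sum_{S\text{ indep.}}\prod_{i\in S}X_i/(1-X_i)$. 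Since $\zeta(s_i)$ contributes the local factor $1/(1-X_i)$, the local factor of $D'_G$ is
\[
L'_p \;=\; L_p\prod_{i=1}^k(1-X_i) \;=\; \sum_{S\subseteq V\ \text{indep.}}\,\prod_{i\in S}X_i\,\prod_{i\notin S}(1-X_i).
\]

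Next I would derive~\eqref{delta_Euler_prod}. Splitting each independent $S$ as $S=L\sqcup M$ with $L=S\cap J$ and $M=S\cap(V\setminus J)$, the fact that $J$ is a vertex cover forces $V\setminus J$ to be independent, so independence of $S$ is equivalent to ``$L$ independent in $J$ and $M\subseteq(V\setminus J)\setminus N(L)$.'' Summing over such $M$, each vertex $i\in(V\setminus J)\setminus N(L)$ contributes $X_i+(1-X_i)=1$, and only the factors $1-X_i$ for $i\in N(L)\setminus J$ persist; combined with $\prod_{\ell\in L}X_\ell$ and $\prod_{i\in J\setminus L}(1-X_i)$ this is exactly \eqref{delta_Euler_prod}. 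This reshuffling over the four vertex classes $L$, $J\setminus L$, $N(L)\setminus J$, $(V\setminus J)\setminus N(L)$ is the main delicate step and, I expect, the principal obstacle---one has to check that the independence condition on $S$ translates cleanly into the two stated conditions and introduces no hidden constraint on $M$.

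For the polynomial form I would expand every $(1-X_i)$ in $L'_p$ as $1+(-X_i)$, write $T\subseteq V\setminus S$ for the indices where $-X_i$ is chosen, and collect by $U:=S\cup T$. The coefficient of $\prod_{i\in U}X_i$ then equals $(-1)^{|U|}\sum_{S\subseteq U,\ S\text{ indep.}}(-1)^{|S|}$. Three observations yield the stated shape: $U=\emptyset$ gives $1$; for $|U|=1$ the subsets $\emptyset,\{i\}$ cancel, killing all linear terms; and if $U$ contains an isolated vertex $i$, toggling $i$'s membership in $S$ is always legitimate and pairs independent subsets with opposite signs, forcing the coefficient to vanish, so only $U\subseteq I$ survive. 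A direct check at $|U|=2$ produces $-X_iX_j$ precisely when $\{i,j\}\in E$, matching the edge sum; the higher $c(i_1,\ldots,i_j)$ are then integers defined by the formula above, manifestly independent of $p$. Absolute convergence finally follows because this form shows $L'_p=1+O(p^{-\alpha})$ with $\alpha=\min\Re(s_{i_1}+\cdots+s_{i_j})$ over subsets of $I$ of size $\ge 2$; under the hypothesis $\alpha>1$, so $\sum_p|L'_p-1|$ converges by comparison with $\sum_p p^{-1-\varepsilon}$, giving absolute convergence of the Euler product and hence of the Dirichlet series.
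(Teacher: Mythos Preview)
Your proof is correct and follows the same overall strategy as the paper---multiplicativity of $\delta_G$, Euler product, and the vertex-cover decomposition by $J$---but the execution differs in two respects. First, the paper strips off the isolated vertices at the outset and works only over $I$, grouping the exponent tuples $(\nu_i)_{i\in I}$ directly by $L=\{\ell\in J:\nu_\ell\ge 1\}$; you instead pass through the symmetric intermediate identity $L'_p=\sum_{S\ \text{indep.}}\prod_{i\in S}X_i\prod_{i\notin S}(1-X_i)$ over all of $V$ and then split each $S$ along $J$ to recover \eqref{delta_Euler_prod}. This is the same combinatorics in a different order. Second, and more substantively, the paper obtains the polynomial form by explicitly expanding the contributions of $L=\emptyset$, $|L|=1$, $|L|=2$ and tracking which $1/p^{s_i}$ and $1/p^{s_i+s_j}$ terms cancel, whereas your route via the closed coefficient formula $(-1)^{|U|}\sum_{S\subseteq U,\ S\ \text{indep.}}(-1)^{|S|}$ together with the toggling involution on an isolated vertex of $U$ is a genuinely cleaner and more uniform argument: it dispatches the vanishing of linear terms and of all monomials touching $V\setminus I$ in one stroke, and makes the integrality and $p$-independence of $c(i_1,\ldots,i_j)$ immediate. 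The convergence argument is the same in both.
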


\begin{remark}
	{\rm By choosing $J=I$ or $J=\{1,\ldots,k\}$ the sum over $L$ in identity \eqref{delta_Euler_prod} has more terms
		than in the case of a minimum vertex cover $J$ of $G$.
		However, if $J=I$, then $N(L)\setminus I=\emptyset$ for every $L$, and \eqref{delta_Euler_prod} takes the slightly simpler form 
		\begin{align*}
			D'_G(s_1,\ldots,s_k) =  \prod_p \Bigg( \sideset{}{'} \sum_{L\subseteq I} \prod_{\ell \in L} \frac1{p^{s_{\ell}}}  \prod_{i\in I\setminus L} \left(1-\frac1{p^{s_i}}\right)\Bigg),
		\end{align*}
		and similarly if $J=\{1,\ldots,k\}$.
	}
\end{remark}

Next we prove by the convolution method the asymptotic formula 
already given in Theorem \ref{Th_delta}. This approach leads to new representations of the constant $A_G$. 

\begin{theorem} \label{Th_delta_new} Asymptotic formula \eqref{form_E} holds with the exponent $\vartheta_G=d_G$ in the error term, and with the constant 
	\begin{align} \nonumber 
		A_G & = \sum_{n_1,\ldots,n_k=1}^{\infty} \frac{(\mu*\delta_G)(n_1,\ldots,n_k)}{n_1\cdots n_k} \\
		\nonumber
		& = \prod_p \left(1-\frac1{p}\right)^k \sum_{\nu_1,\ldots,\nu_k=0}^{\infty} \frac{\delta_G(p^{\nu_1},\ldots,p^{\nu_k})}{p^{\nu_1+\cdots + \nu_k}} \\
		\label{A_G_version_new}
		& = \prod_p \Bigg( \sideset{}{'} \sum_{L\subseteq J} \frac1{p^{|L|}} \left(1-\frac1{p}\right)^{|J\setminus L|+ 
			|N(L) \setminus J|} \Bigg). 
	\end{align}
\end{theorem}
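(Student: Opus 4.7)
The plan is to apply the convolution method using the Dirichlet series decomposition provided by Theorem \ref{Th_Dirichlet_series}. Set $h := \mu * \delta_G$; since $\mu$ and $\delta_G$ are both multiplicative (the latter by Lemma \ref{Lemma_multipl}), so is $h$, and by Theorem \ref{Th_Dirichlet_series} its Dirichlet series equals $D'_G$. The crucial observation is that $D'_G(1,\ldots,1)$ converges absolutely: the convergence condition in Theorem \ref{Th_Dirichlet_series} requires $\Re(s_{i_1}+\cdots+s_{i_j}) > 1$ for every subset of $j \geq 2$ non-isolated indices, and at $s_i = 1$ this is just $j \geq 2 > 1$.

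This absolute convergence lets me invoke Theorem \ref{Th_Wintner_gen} together with its multiplicative version from Section \ref{subSect_Arithm_func}, yielding both
\begin{equation*}
A_G = \sum_{n_1,\ldots,n_k=1}^{\infty} \frac{(\mu*\delta_G)(n_1,\ldots,n_k)}{n_1 \cdots n_k}
\end{equation*}
and its Euler product form
\begin{equation*}
A_G = \prod_p \left(1-\frac1{p}\right)^k \sum_{\nu_1,\ldots,\nu_k=0}^{\infty} \frac{\delta_G(p^{\nu_1},\ldots,p^{\nu_k})}{p^{\nu_1+\cdots+\nu_k}}.
\end{equation*}
Identity \eqref{A_G_version_new} then falls out by substituting $s_1 = \cdots = s_k = 1$ directly into the Euler product \eqref{delta_Euler_prod}.

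For the asymptotic formula with remainder I would use $\delta_G = \mathbf{1} * h$ to write
\begin{equation*}
\sum_{n_1,\ldots,n_k\leq x} \delta_G(n_1,\ldots,n_k) = \sum_{d_1,\ldots,d_k \leq x} h(d_1,\ldots,d_k) \prod_{i=1}^k \lfloor x/d_i \rfloor,
\end{equation*}
expand $\lfloor x/d_i \rfloor = x/d_i - \{x/d_i\}$, and distribute the product over subsets $S \subseteq \{1,\ldots,k\}$. The $S = \emptyset$ term gives $A_G x^k$, with a tail contribution controlled by the absolute convergence of $D'_G(1,\ldots,1)$; each nonempty $S$ contributes at most $x^{k-|S|}$ times a truncated sum of $|h|$ in which the coordinates indexed by $S$ are unweighted while the others carry a weight $1/d_j$.

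The main obstacle is bounding all of these error sums by $O(x^{k-1}(\log x)^{d_G})$. The dominant case is $|S|=1$, say $S = \{i_0\}$, where the restricted sum is essentially an Euler product whose local factors can be read off \eqref{delta_Euler_prod}: because $h$ is supported on tuples with $d_j$ squarefree and $d_j = 1$ for isolated $j \notin I$, each local factor has finitely many nonzero terms whose size depends on whether $i_0$ is incident via an edge of $G$ to the remaining indices. The terms of order $1/p$, which produce a logarithm upon multiplication over $p \leq x$, correspond precisely to the neighbours of $i_0$, while terms of order $1/p^2$ or smaller give only bounded constants. A careful bookkeeping, analogous to that in \cite{Tot2016} for the pairwise coprime case, then shows that each local factor behaves like $1 + \deg(i_0)/p + O(1/p^2)$, so the logarithmic exponent for $S = \{i_0\}$ is $\deg(i_0)$; maximising over $i_0$ gives $\max_{i_0}\deg(i_0) = d_G$, as claimed. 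The cases $|S| \geq 2$ save an extra power of $x$ and hence are absorbed into the same error term.
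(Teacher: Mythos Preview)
Your overall strategy matches the paper's: write $\delta_G = \mathbf{1} * h$ with $h = \mu*\delta_G$, expand the floor functions, and read the constants off Theorem \ref{Th_Dirichlet_series}. The identification of the three expressions for $A_G$ is fine, and your treatment of the $|S|=1$ error terms --- each local factor behaving like $1+\deg(i_0)/p + O(1/p^2)$, hence a $(\log x)^{\deg(i_0)}$ via Mertens --- is exactly the mechanism in the paper (see the product \eqref{A} and the estimate leading to \eqref{Mertens}).

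There is, however, a genuine gap in the ``$S=\emptyset$'' step. You write that the tail contribution is ``controlled by the absolute convergence of $D'_G(1,\ldots,1)$''. Absolute convergence only tells you that
\[
\sum_{d_1,\ldots,d_k\le x}\frac{h(d_1,\ldots,d_k)}{d_1\cdots d_k}=A_G+o(1),
\]
which after multiplication by $x^k$ gives an error of $o(x^k)$, not $O(x^{k-1}(\log x)^{d_G})$. To reach the stated remainder you must prove the quantitative tail bound
\[
\sum_{\substack{d_i>x,\ i\in I\\ d_j\le x,\ j\notin I}}\frac{|h(d_1,\ldots,d_k)|}{d_1\cdots d_k}\ll \frac{(\log x)^{d_G}}{x}
\]
for every nonempty $I\subseteq\{1,\ldots,k\}$. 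The paper does this by a case split on $|I|$: for $|I|\ge 3$ one exploits the convergence of $D'_G$ at exponents $1/2+\varepsilon$ on the large coordinates; for $|I|=1$ one uses the vanishing of $h$ on prime powers with a single nontrivial coordinate (Remark \ref{Remark_delta}) to restrict to primes $p\le x$ and then argue as in the $|S|=1$ error analysis; and $|I|=2$ requires a further dyadic-type split. None of this follows from absolute convergence alone, and it is the most delicate part of the proof.

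A smaller point: your dismissal of the $|S|\ge 2$ error terms as ``saving an extra power of $x$'' is not quite safe as stated, since the corresponding sums carry unweighted coordinates and could in principle grow. The paper sidesteps this by noting that $(x/d_i)^{u_i}\le x/d_i$ for all $i$ (since $d_i\le x$), which reduces every configuration with at least one $u_i=0$ to the single-unweighted case you already handled.
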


\begin{remark}
	{\rm If $J=I$, then $N(L)\setminus I=\emptyset$ for every $L$, and  \eqref{A_G_version_new}  gives
		\begin{align*}
			A_G & =  \prod_p \Bigg( \sideset{}{'} \sum_{L\subseteq I} \frac1{p^{|L|}} \left(1-\frac1{p}\right)^{|I\setminus L|} \Bigg) \\
			& = \prod_p \Bigg( \sum_{m=0}^{|I|} \frac{i_m(G,I)}{p^m}\left(1-\frac1{p}\right)^{|I|-m}\Bigg),
		\end{align*}
		where $i_m(G,I)$ denotes the number of independent subsets of $I$ of cardinality $m$ in the graph $G$.
		Similarly, by choosing $J=\{1,\ldots,k\}$, \eqref{A_G_version_new} reduces to identity \eqref{A_G} by Hu \cite{Hu2014}.
	}
\end{remark}

Now consider the functions $\beta_r$ and $\beta'_r$ defined by \eqref{def_beta_exact} and \eqref{def_beta_least}.

\begin{theorem} \label{Th_exact_least} Let $k\ge 2$.
	Then for $r\ge 0$ we have 
	\begin{equation} \label{asympt_form_exact} 
		\sum_{n_1,\ldots,n_k\le x} \beta_r(n_1,\ldots,n_k) = 
		C_r x^k  + O(x^{k-1} (\log x)^{k-1}),
	\end{equation}
	and for $r\ge 1$,
	\begin{equation} \label{asympt_form_least} 
		\sum_{n_1,\ldots,n_k\le x} \beta'_r(n_1,\ldots,n_k) = 
		C'_r x^k  + O(x^{k-1} (\log x)^{k-1}),
	\end{equation}
	where
	\begin{equation}  \label{C_k_r}
		C_r = C_{k,r} = \sum_{j=r}^{k(k-1)/2} (-1)^{j-r} \binom{j}{r}\sum_{\substack{E\subseteq V^{(2)}\\ |E|=j}} A_G, 
	\end{equation}
	and 
	\begin{equation} \label{C'_k_r}
		C'_r = C'_{k,r} = \sum_{j=r}^{k(k-1)/2} (-1)^{j-r} \binom{j-1}{r-1}\sum_{\substack{E\subseteq V^{(2)}\\ |E|=j}} A_G 
	\end{equation}
	are the asymptotic densities of the $k$-tuples $(n_1,\ldots,n_k)\in \N^k$ such that $\gcd(n_i,n_j)=1$ occurs exactly $r$ times,
	respectively at least $r$ times, with $A_G$ given in Theorems \ref{Th_delta} 
	and \ref{Th_delta_new}.
\end{theorem}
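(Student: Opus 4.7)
The plan is to combine the combinatorial identities \eqref{beta_r} and \eqref{beta_least_r} directly with the asymptotic formula of Theorem \ref{Th_delta_new} (equivalently, Theorem \ref{Th_delta}). Since $\beta_r$ and $\beta'_r$ are expressed as finite $\Z$-linear combinations of functions $\delta_G$ over all graphs $G=(V,E)$ with a prescribed number of edges $|E|=j$, summing both sides over $(n_1,\dots,n_k)$ with each coordinate at most $x$ and interchanging the (finite) summations reduces the problem at once to the asymptotics of $\sum_{n_i\le x}\delta_G(n_1,\ldots,n_k)$.

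Concretely, first I would substitute \eqref{beta_r} into the left-hand side of \eqref{asympt_form_exact} and write
\begin{equation*}
\sum_{n_1,\ldots,n_k\le x}\beta_r(n_1,\ldots,n_k)
= \sum_{j=r}^{k(k-1)/2}(-1)^{j-r}\binom{j}{r}\sum_{\substack{E\subseteq V^{(2)}\\|E|=j}}\sum_{n_1,\ldots,n_k\le x}\delta_G(n_1,\ldots,n_k).
\end{equation*}
Applying Theorem \ref{Th_delta_new} to each inner triple sum replaces $\sum_{n_i\le x}\delta_G$ with $A_G x^k + O(x^{k-1}(\log x)^{d_G})$. Gathering the main terms gives exactly the constant $C_{k,r}$ as defined in \eqref{C_k_r}. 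The identical plan, using \eqref{beta_least_r}, handles \eqref{asympt_form_least} and produces the constant $C'_{k,r}$ of \eqref{C'_k_r}.

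For the error term I would use the trivial bound $d_G = \max_{j\in V}\deg(j) \le k-1$, valid for every simple graph on $k$ vertices, so that each individual error is $O(x^{k-1}(\log x)^{k-1})$. Since the outer sum ranges over $j\le k(k-1)/2$ and for each $j$ there are at most $\binom{k(k-1)/2}{j}$ choices of $E$, the total number of $\delta_G$-terms is a constant depending only on $k$; hence the accumulated error is also $O(x^{k-1}(\log x)^{k-1})$.

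Finally, to identify $C_{k,r}$ and $C'_{k,r}$ as genuine asymptotic densities, I would divide the established asymptotics by $x^k$ and let $x\to\infty$; by \eqref{def_beta_exact} and \eqref{def_beta_least} these limits are by definition the densities of $k$-tuples with exactly, respectively at least, $r$ coprime pairs. I do not anticipate a serious obstacle: the combinatorial identities of Section \ref{subSect_Fuctions} do all the nontrivial work, and what remains is a bookkeeping argument plus the uniform bound $d_G\le k-1$. The only point requiring mild care is ensuring that the error exponent $\vartheta_G=d_G$ in Theorem \ref{Th_delta_new} is used in place of the weaker $\vartheta_G=k-1$ of Hu only if one wants a sharper error for individual $G$; for the present theorem the uniform bound $k-1$ is already optimal in view of the complete graph appearing in the sum.
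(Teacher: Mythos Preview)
Your proposal is correct and follows essentially the same route as the paper: substitute the inclusion--exclusion identities \eqref{beta_r} and \eqref{beta_least_r}, apply the asymptotic formula \eqref{form_E} to each inner sum $\sum_{n_i\le x}\delta_G$, and observe that the worst error $O(x^{k-1}(\log x)^{k-1})$ (attained at the complete graph, where $d_G=k-1$) dominates. The paper's proof is in fact terser than yours; your additional remarks on counting the $\delta_G$-terms and on identifying the constants as densities are correct elaborations of points the paper leaves implicit.
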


We remark that identities \eqref{C_k_r} and \eqref{C'_k_r} have been obtained by Hu \cite[Cor.\ 3]{Hu2014} with an incomplete proof.

\begin{corollary} \label{Cor_asymt_beta_k} \textup{($r=0$)} We have
	\begin{align*}
		\sum_{n_1,\ldots, n_k \le x} \beta(n_1,\ldots, n_k) 
		= C_k x^k + O(x^{k-1} (\log x)^{k-1}),
	\end{align*}
	where
	\begin{align} \nonumber
		C_k= C_{k,0} & = \sum_{n_1,\ldots,n_k=1}^{\infty} \frac{(\mu*\beta)(n_1,\ldots,n_k)}{n_1\cdots n_k} \\
		\label{C_k_0}
		& = \sum_{j=0}^{k(k-1)/2} (-1)^j \sum_{\substack{E\subseteq V^{(2)}\\ |E|=j}} A_G.
	\end{align}
\end{corollary}

Identity \eqref{C_k_0} has been obtained by Hu \cite[Cor.\ 3]{Hu2014}.

Note that if $G$ and $G'$ are isomorphic graphs then the corresponding densities $A_G$ and $A_{G'}$ are equal.
The asymptotic densities $C_{k,r}$, $C'_{k,r}$ and $C_k$ can be computed for given values of $k$ and $r$ 
from identities \eqref{C_k_r}, \eqref{C'_k_r} and \eqref{C_k_0}, respectively by determining the cardinalities
of the isomorphism classes of graphs $G$ with $k$ vertices and $j$ edges ($0\le j\le k(k-1)/2$) and by computing the 
corresponding values of $A_G$. In particular, $C_3$ and $C_4$ given by \eqref{C_3} and \eqref{C_4} can be obtained 
in this way.

\section{Proofs} \label{Sect_Proofs}

We first prove the key result of our treatment.

\begin{proof}[Proof of Theorem {\rm \ref{Th_Dirichlet_series}}] 
	We have 
	\begin{align*}
		D(\delta_G;s_1,\ldots,s_k) = \sum_{n_1,\ldots,n_k=1}^{\infty} \frac{\delta_G(n_1,\ldots,n_k)}{n_1^{s_1}\cdots n_k^{s_k}}
		= \sum_{\substack{n_1,\ldots,n_k=1\\ \gcd(n_{i_1},n_{i_2})=1\\ \{i_1,i_2\} \in E}}^{\infty}  \frac1{n_1^{s_1}\cdots n_k^{s_k}}.
	\end{align*}
	
	Let $I$ denote the set of non-isolated vertices of $G$. Then 
	\begin{align*}
		D(\delta_G;s_1,\ldots,s_k) & = \prod_{i \notin I} \zeta(s_i) \sum_{\substack{n_i \ge 1, \, i \in I \\ \gcd(n_{i_1},n_{i_2})=1, \, \{i_1,i_2\} \in E}} \prod_{i \in I} \frac1{n_i^{s_i}} \\
		& = \prod_{i \notin I} \zeta(s_i) \prod_p \Bigg( \sum_{\substack{\nu_i\ge 0, \, i \in I\\ \nu_{i_1}\nu_{i_2}=0, \, \{i_1,i_2\}\in E}} \ \frac1{p^{\sum_{i \in I} \nu_i s_i}} \Bigg) \\
		& =: \prod_{i \notin I} \zeta(s_i) \prod_p S_p,
	\end{align*}
	say, using that the function $\delta_G$ is multiplicative by Lemma \ref{Lemma_multipl}.
	
	Now choose a (minimum) vertex cover $J$. Then $\nu_j$ ($j\in J$) cover all the conditions $\nu_{i_1}\nu_{i_2}=0$ with $\{i_1,i_2\}\in E$, that is, for every $\{i_1,i_2\}\in E$ there is $j\in J$ such that $j=i_1$ or $j=i_2$.
	Group the terms of the sum $S_p$ according to the subsets $L=\{\ell \in J: \nu_{\ell}\ge 1 \}$ of $J$. 
	Here $\nu_j=0$ for every $j \in J\setminus L$. Note that $L$ cannot contain any two adjacent vertices. Also, for such a fixed subset $L \subseteq J$ let $M$ be the set of indexes $m$ such that $\nu_m$ is forced to be zero by $L$. 
	More exactly, let $M= \{m\in I\setminus J: \text{there is $\ell \in L$ with $\{m,\ell\}\in E$} \}$. If $m\in M$, then 
	$\nu_m\nu_{\ell}=0$ for some $\ell \in L$. Since $\nu_{\ell}\ge 1$, we obtain $\nu_m=0$. Here $M=N(L)\setminus J$, where
	$N(L)$ is set of vertices adjacent to vertices in $L$.
	
	Let $\sum_{L\subseteq J}^{'}$ denote the sum over subsets $L$ of $J$ that have no adjacent vertices.
	We obtain
	\begin{align*}
		S_p  & =\sideset{}{'} \sum_{L\subseteq J} \sum_{\substack{\nu_{\ell} \ge 1, \, \ell \in L\\ \nu_j=0, \, j\in J\setminus L\\ 
				\nu_m=0, \, m\in M \\ \nu_i\ge 0, \, i\in I\setminus (J\cup M)}} \frac1{p^{\sum_{i\in I} \nu_i s_i}} \\
		& = \sideset{}{'} \sum_{L\subseteq J} \sum_{\nu_{\ell} \ge 1, \, \ell \in L} \frac1{p^{\sum_{\ell \in L} \nu_{\ell} s_{\ell}}} 
		\sum_{\nu_i\ge 0, \, i\in I\setminus (J\cup M)} \frac1{p^{\sum_{i \in I\setminus (J\cup M)} \nu_i s_i}} \\
		& = \sideset{}{'} \sum_{L\subseteq J} \prod_{\ell \in L} \frac1{p^{s_{\ell}}} \left(1-\frac1{p^{s_{\ell}}}\right)^{-1} \prod_{i \in I\setminus (J\cup M)} \left(1-\frac1{p^{s_i}} \right)^{-1} \\
		& = \prod_{i\in I} \left(1-\frac1{p^{s_i}}\right)^{-1} \sideset{}{'} \sum_{L\subseteq J} \prod_{\ell \in L} \frac1{p^{s_{\ell}}}  
		\prod_{i\in (J\setminus L) \cup M} \left(1-\frac1{p^{s_i}}\right)
		\\
		& =: \prod_{i\in I} \left(1-\frac1{p^{s_i}}\right)^{-1} T_p,  
	\end{align*}
	say. We deduce that 
	\begin{align*}
		\prod_p S_p = \prod_{i\in I} \zeta(s_i) \prod_p  T_p ,
	\end{align*}
	which shows that 
	\begin{align*}
		D(\delta_G;s_1,\ldots,s_k) = \Big(\prod_{i=1}^k \zeta(s_i) \Big) D'(s_1,\ldots,s_k),
	\end{align*}
	where
	\begin{equation} \label{mu_delta_G_product}
		D'(s_1,\ldots,s_k)= \prod_p T_p = \prod_p \Bigg(\sideset{}{'} \sum_{L\subseteq J} \prod_{\ell \in L} \frac1{p^{s_{\ell}}}  \prod_{i\in (J\setminus L) \cup M} \left(1-\frac1{p^{s_i}}\right)\Bigg). 
	\end{equation}
	
	Let us investigate the terms of the sum $\sum_{L\subseteq J}^{'}$ in \eqref{mu_delta_G_product}. If $L=\emptyset$, that 
	is $\nu_j=0$ for every $j\in J$, then $M=\emptyset$ and we have 
	\begin{equation} \label{L_0}
		\prod_{i\in J} \left(1-\frac1{p^{s_i}}\right)= 1-\sum_{i\in J} \frac1{p^{s_i}} + \sum_{i,j\in J, \, i<j} \frac1{p^{s_i+s_j}}-\cdots .
	\end{equation}
	
	If $L=\{i_0\}$ for some fixed $i_0\in J$, then obtain, with $M = M_{i_0}: = N(i_0)\setminus J$, 
	\begin{align*} 
		\frac1{p^{s_{i_0}}} \prod_{\substack{i\in J\cup M_{i_0}\\ i\ne i_0}} \left(1-\frac1{p^{s_i}}\right) = \frac1{p^{s_{i_0}}} -\sum_{\substack{i\in J\cup M_{i_0}\\ i\ne i_0}} \frac1{p^{s_{i_0}+s_i}} + \sum_{\substack{i,j\in J\cup M_{i_0} \\ i_0\ne i<j\ne i_0}} \frac1{p^{s_{i_0}+s_i+s_j}}-\cdots .
	\end{align*}
	
	Here if $i_0=t$ runs over $J$, then we have the terms 
	\begin{equation} \label{L_1}
		\sum_{t\in J} \frac1{p^{s_t}} -\sum_{\substack{i\in J\cup M_t\\ t\in J\\  i\ne t}} \frac1{p^{s_t + s_i}} + 
		\sum_{\substack{i,j\in J\cup M_t \\ t\in J, i\ne t, j\ne t\\ i<j}} \frac1{p^{s_t+s_i+s_j}}- \cdots .
	\end{equation}
	
	If $L=\{i_0,i'_0\}$ with some fixed $i_0, i'_0\in J$, $i_0<i'_0$, which are not adjacent, then obtain, 
	with $M = M_{i_0,i'_0}:= N(\{i_0,i'_0\})\setminus J$, 
	\begin{equation} \label{L_i_0}
		\frac1{p^{s_{i_0}+s_{i'_0}}} \prod_{\substack{i\in J\cup M_{i_0,i'_0} \\ i\ne i_0,i'_0}} \left(1-\frac1{p^{s_i}}\right) = \frac1{p^{s_{i_0}+s_{i'_0}}} -\sum_{\substack{i\in J\cup M_{i_0,i'_{0}}\\ i\ne i_0,i'_0}} \frac1{p^{s_{i_0}+s_{i'_0}+ s_i}} + \cdots  .
	\end{equation}
	
	If $i_0=t, i'_0=v$ run over $J$, then we obtain from \eqref{L_i_0},
	\begin{equation} \label{L_2} 
		\sum_{\substack{t,v\in J\\ t<v\\ t,v \text{ not adjacent}}}  \frac1{p^{s_t+s_v}} -\sum_{\substack{i\in J\cup M_{t,v}\\ t,v\in J \text{ not adjacent}\\  i\ne t,v}} \frac1{p^{s_t + s_v+ s_i}} + \cdots .
	\end{equation}
	
	Putting together \eqref{L_0}, \eqref{L_1} and \eqref{L_2} we obtain the sum $S$, where
	\begin{align*} 
		S & = 1- \sum_{i\in J} \frac1{p^{s_i}} + \sum_{i,j\in J, \, i<j} \frac1{p^{s_i+s_j}} +
		\sum_{t\in J} \frac1{p^{s_t}} -\sum_{\substack{i\in J\cup M_t\\ t\in J\\  i\ne t}} \frac1{p^{s_t + s_i}} 
		\\ & \quad +\sum_{\substack{t,v\in J\\ t<v\\ t,v \text{ not adjacent}}}  \frac1{p^{s_t+s_v}} \pm \text{ other terms} \\
		& = 1- \sum_{\substack{i,t\in J\\ i,t \text{ adjacent}}} \frac1{p^{s_t + s_i}} \pm \text{ other terms},
	\end{align*}
	where the terms $\pm 1/p^i$ with $i\in J$ cancel out. Also the terms $\pm 1/p^{s_i+s_j}$ with $i,j\in J$ (each appearing twice) cancel out, excepting when $i,j$ are adjacent. Here for the ``other terms'', including the terms obtained if $L$ has at least three elements, the exponents of $p$ are sums of at least three distinct values $s_i$, $s_j$, $s_{\ell}$ with $i,j,\ell \in I$. 
	
	Hence the infinite product \eqref{mu_delta_G_product} is absolutely convergent 
	provided the given condition. 
\end{proof} 

\begin{remark} \label{Remark_delta}
	{\rm It turns out that the function $\mu*\delta_G$ is multiplicative (in general not symmetric in the variables) and for all prime powers $p^{\nu_1},\ldots,p^{\nu_k}$,
		\begin{equation} \label{mu_delta_primes}
			(\mu*\delta_G)(p^{\nu_1},\ldots,p^{\nu_k})=
			\begin{cases} 1, & \text{ if $\nu_1=\cdots=\nu_k=0$};\\
				c(\nu_1,\ldots,\nu_k), & \text{ if $\nu_1,\ldots,\nu_k\in \{0,1\}, \ j:=\nu_1+\cdots+\nu_k \ge 2$}; \\
				0, & \text{ otherwise},
			\end{cases}
		\end{equation} 
		where $c(\nu_1,\ldots,\nu_k)$ are some integers, depending on $\nu_1,\ldots,\nu_k$, 
		but not on $p$.
		
		Note that $(\mu*\delta_G)(p^{\nu_1},\ldots,p^{\nu_k})=0$ provided that
		$\nu_i\ge 2$ for at least one $1\le i\le k$, or
		$\nu_1,\ldots,\nu_k\in \{0,1\}$ and $\nu_1+\cdots+\nu_k=1$. If 
		$\nu_1,\ldots,\nu_k\in \{0,1\}$ and $\nu_1+\cdots+\nu_k=2$, say $\nu_{i_0}=\nu_{i'_0}=1$
		and $\nu_i=0$ for $i\ne i_0, i'_0$, then $(\mu*\delta_G)(p^{\nu_1},\ldots,p^{\nu_k})= -1$
		if $i_0$ and $i'_0$ are adjacent in the graph $G$ and $0$ otherwise. 
	}
\end{remark}

\begin{proof}[Proof of Theorem {\rm \ref{Th_delta_new}}]
	Write
	\begin{align} \nonumber
		\sum_{n_1,\ldots,n_k\le x} \delta_G(n_1,\ldots,n_k) & = \sum_{n_1,\ldots,n_k\le x} \sum_{d_1\mid n_1,\ldots, d_k\mid n_k}
		(\mu* \delta_G)(d_1,\ldots,d_k) \\
		\nonumber
		& = \sum_{d_1,\ldots,d_k\le x} (\mu * \delta_G)(d_1,\ldots,d_k) \left\lfloor \frac{x}{d_1} \right\rfloor  
		\cdots \left\lfloor \frac{x}{d_k} \right\rfloor \\
		\nonumber 
		& = \sum_{d_1,\ldots,d_k\le x} (\mu * \delta_G)(d_1,\ldots,d_k) \left(\frac{x}{d_1} +O(1) \right) \cdots \left(\frac{x}{d_k} +O(1) \right) \\
		\label{main_term}
		& = x^k \sum_{d_1, \ldots, d_k\le x}
		\frac{(\mu*\delta_G)(d_1,\ldots,d_k)}{d_1\cdots d_k} + R_k(x),
	\end{align}
	with
	\begin{align*}
		R_k(x)\ll \sum_{u_1,\ldots,u_r} x^{u_1+\cdots+u_k}
		\sum_{d_1,\ldots, d_k\le x} \frac{|(\mu*\delta_G)(d_1,\ldots,d_k)|}{d_1^{u_1}\cdots d_k^{u_k}},
	\end{align*}
	where the first sum is over $u_1,\ldots,u_k\in \{0, 1\}$ such that
	at least one $u_i$ is $0$. Let $u_1,\ldots,u_k$ be fixed and assume
	that $u_{i_0}=0$. Since $(x/d_i)^{u_i}\le x/d_i$ for every $i$ ($1\le i\le k$) we have
	\begin{align} \nonumber
		A & := x^{u_1+\cdots+u_k} \sum_{d_1,\ldots, d_k\le x}
		\frac{|(\mu*\delta_G)(d_1,\ldots,d_k)|}{d_1^{u_1}\cdots d_k^{u_k}} \\
		\nonumber
		& \le x^{k-1} \sum_{d_1,\ldots,d_k\le x} \frac{|(\mu*\delta_G)(d_1,\ldots,d_k)|}{\prod_{1\le i\le k, i\ne i_0}
			d_i} \\ \nonumber
		& \le x^{k-1} \prod_{p\le x} \sum_{\nu_1,\ldots, \nu_k=0}^{\infty}
		\frac{|(\mu*\delta_G)(p^{\nu_1},\ldots,p^{\nu_k})|}{p^{\sum_{1\le i\le k, i\ne i_0} \nu_i}}
		\\ \label{A}
		& = x^{k-1} \prod_{p\le x} \left(1+\frac{c_{i_0,1}}{p} +\frac{c_{i_0,2}}{p^2}+\cdots+ 
		\frac{c_{i_0,k-1}}{p^{k-1}} \right), 
	\end{align}
	cf. \eqref{mu_delta_primes}, where $c_{i_0,j}$ ($1\le j\le k-1$) are certain non-negative integers. Here $c_{i_0,1}=\deg(i_0)$, 
	the degree of $i_0$, according to Remark \ref{Remark_delta}.  We obtain that 
	\begin{align*}
		A\ll x^{k-1} \prod_{p\le x} \left(1+\frac1{p}\right)^{\deg(i_0)} \ll
		x^{k-1} (\log x)^{\deg(i_0)}
	\end{align*}
	by Mertens' theorem. This shows that
	\begin{equation}  \label{Mertens}
		R_k(x)\ll x^{k-1}(\log x)^{\max \deg(i_0)}.
	\end{equation} 
	
	Furthermore, for the main term of \eqref{main_term} we have
	\begin{align*}
		\sum_{d_1, \ldots, d_k\le x} \frac{(\mu*\delta_G)(d_1,\ldots,d_k)}{d_1\cdots d_k}
	\end{align*}
	\begin{equation} \label{roro}
		= \sum_{d_1,\ldots,d_k=1}^{\infty}
		\frac{(\mu*\delta_G)(d_1,\ldots,d_k)}{d_1 \cdots d_k} - \sum_{\emptyset \ne I \subseteq \{1,\ldots,k\}}
		\sum_{\substack{d_i>x, \, i\in I\\ d_j\le x,
				\, j\notin I}} \frac{(\mu*\delta_G)(d_1,\ldots,d_k)}{d_1\cdots d_k},
	\end{equation}
	where the series is convergent by Theorem \ref{Th_Dirichlet_series} and its
	sum is $D(\mu*\delta_G;1,\ldots,1)=A_G$.
	
	Let $I$ be fixed with $|I|=t$. We estimate the sum
	\begin{align*}
		B:= \sum_{\substack{d_i>x, \, i\in I\\ d_j\le x,
				\, j\notin I}} \frac{|(\mu*\delta_G)(d_1,\ldots,d_k)|}{d_1\cdots d_k}.
	\end{align*}
	
	Case I. Assume that $|I|=t\ge 3$. If $0<\varepsilon <1/2$, then
	\begin{align*}
		B & = \sum_{\substack{d_i>x, \, i\in I\\ d_j\le x,
				\, j\notin I}} \frac{|(\mu*\delta_G)(d_1,\ldots,d_k)| \prod_{i\in I} d_i^{\varepsilon-1/2}
		}{\prod_{i\in I} d_i^{1/2+\varepsilon} \prod_{j\notin I} d_j} \\
		& \le x^{t(\varepsilon-1/2)} \sum_{d_1,\ldots,d_k=1}^{\infty} 
		\frac{|(\mu*\delta_G)(d_1,\ldots,d_k)|}{\prod_{i\in I} d_i^{1/2+\varepsilon} \prod_{j\notin I} d_j} \\
		& \ll x^{t(\varepsilon-1/2)},
	\end{align*}
	since the series is convergent (for $t\ge 1$). Using that
	$t(\varepsilon -1/2)< -1$ for $0<\varepsilon< (t-2)/(2t)$, here we
	need $t\ge 3$, we obtain $B\ll \frac1{x}$. 
	
	Case II. $t=1$: Let $d_{i_0}>x$, $d_i\le x$ for $i\ne i_0$, and
	consider a prime $p$.  If $p\mid d_i$ for some $i\ne i_0$, then $p\le x$. 
	If $p\mid d_{i_0}$ and $p>x$, then
	$p\nmid d_i$ for every $i\ne i_0$, and
	$(\mu *\delta_G)(d_1,\ldots,d_k)=0$, cf. Remark \ref{Remark_delta}. Hence
	it is enough to consider the primes $p\le x$. We deduce
	\begin{align*}
		B & < \frac1{x} \sum_{\substack{d_{i_0}> x\\ d_i\le x, i\ne i_0}}
		\frac{|(\mu* \delta_G)(d_1,\ldots,d_k)|}{\prod_{i\ne i_0} d_i}
		\\
		& \le \frac1{x} \prod_{p\le x} \sum_{\nu_1,\ldots, \nu_k=0}^{\infty}
		\frac{|(\mu* \delta_G)(p^{\nu_1},\ldots,p^{\nu_k})|}{p^{\sum_{i\ne i_0} \nu_i}}
		\\
		& \ll \frac1{x}(\log x)^{\max \deg(i_0)},
	\end{align*}
	similar to the estimate \eqref{Mertens}.
	
	Case III. $t=2$: Let $d_{i_0}>x$, $d_{i'_0}>x$. We split the sum $B$ into two sums, namely
	\begin{align*}
		B & = \sum_{\substack{d_{i_0}>x,d_{i'_0}>x\\ d_i\le x, i\ne i_0,i'_0}}
		\frac{|(\mu*\delta_G)(d_1,\ldots,d_k)|}{d_1\cdots d_k}
		\\
		& = \sum_{\substack{d_{i_0} > x^{3/2},d_{i'_0} >x\\ d_i\le x, i\ne i_0,i'_0}}
		\frac{|(\mu*\delta_G)(d_1,\ldots,d_k)|}{d_1\cdots d_k} +
		\sum_{\substack{x^{3/2}\ge d_{i_0}> x,d_{i'_0} >x\\ d_i\le x, i\ne i_0, i'_0}}
		\frac{|(\mu*\delta_G)(d_1,\ldots,d_k)|}{d_1\cdots d_k}
		\\
		& =: B_1+B_2,
	\end{align*}
	say, where
	\begin{align*}
		B_1 & = \sum_{\substack{d_{i_0}> x^{3/2},d_{i'_0} >x\\ d_i\le x, i\ne i_0,i'_0}}
		\frac{|(\mu*\delta_G)(d_1,\ldots,d_k)|}{d_{i_0}^{1/3} \prod_{i\ne i_0} d_i}
		\frac1{d_{i_0}^{2/3}}
		\\
		& <\frac1{x} \sum_{d_1,\ldots,d_k=1}^{\infty}
		\frac{|(\mu*\delta_G)(d_1,\ldots,d_k)|}{d_{i_0}^{1/3} \prod_{i\ne i_0} d_i} 
		\\
		& \ll \frac1{x},
	\end{align*}
	since the latter series is convergent. Furthermore,
	\begin{align*}
		B_2 <\frac1{x} \sum_{\substack{x^{3/2}\ge d_{i_0}, d_{i'_0} >x \\
				d_i\le x, i\ne i_0,i'_0}}
		\frac{|(\mu*\delta_G)(d_1,\ldots,d_k)|}{\prod_{i\ne i'_0} d_i},
	\end{align*}
	and consider a prime $p$. For the last sum, if $p\mid d_i$ for some $i\ne i'_0$
	then $p\le x^{3/2}$. If $p\mid d_{i'_0}$ and
	$p>x^{3/2}$, then $p\nmid d_i$ for every $i\ne i'_0$ and
	$(\mu*\delta_G)(d_1,\ldots,d_r)=0$, cf. Remark \ref{Remark_delta}. Hence it is enough
	to consider the primes $p\le x^{3/2}$. We deduce, similar to \eqref{A}, \eqref{Mertens} that
	\begin{align*}
		B_2 <\frac1{x} \prod_{p\le x^{3/2}} \sum_{\nu_1,\ldots, \nu_r=0}^{\infty}
		\frac{|(\mu*\delta_G)(p^{\nu_1},\ldots,p^{\nu_k})|}{p^{\sum_{i\ne i'_0} \nu_i}} 
		\ll \frac1{x}(\log x^{3/2})^{d_G} \ll \frac1{x}(\log x)^{d_G},
	\end{align*}
	with $d_G=\max_{i\in G} \deg(i)$.
	
	Hence given any $|I|=t\ge 1$ we have $B\ll \frac1{x}(\log x)^{d_G}$. Therefore, by
	\eqref{roro},
	\begin{equation} \label{second}
		\sum_{d_1, \ldots, d_r\le x}
		\frac{(\mu*\delta_G)(d_1,\ldots,d_k)}{d_1\cdots d_k} = A_G+
		O(x^{-1}(\log x)^{d_G}).
	\end{equation}
	
	The proof is complete by putting  together \eqref{main_term},
	\eqref{Mertens} and \eqref{second}. 
\end{proof}

\begin{proof}[Proof of Theorem {\rm \ref{Th_exact_least}}]
	According to identities \eqref{beta_r} and \eqref{beta_least_r} we have
	\begin{equation} \label{sum_exact}
		\sum_{n_1,\ldots,n_k\le x} \beta_r(n_1,\ldots,n_k)=
		\sum_{j=r}^{k(k-1)/2}  (-1)^{j-r} \binom{j}{r} \sum_{\substack{E\subseteq V^{(2)}\\ |E|=j}} \sum_{n_1,\ldots,n_k\le x}
		\delta_G(n_1,\ldots,n_k),    
	\end{equation}
	and
	\begin{equation} \label{sum_least}
		\sum_{n_1,\ldots,n_k\le x} \beta'_r(n_1,\ldots,n_k) = 
		\sum_{j=r}^{k(k-1)/2} (-1)^{j-r} \binom{j-1}{r-1} \sum_{\substack{E\subseteq S\\ |E|=j}} \sum_{n_1,\ldots,n_k\le x}
		\delta_G(n_1,\ldots,n_k).
	\end{equation}
	
	Now for the inner sums $\sum_{n_1,\ldots,n_k\le x}
	\delta_G(n_1,\ldots,n_k)$ of identities \eqref{sum_exact} and \eqref{sum_least} use asymptotic 
	formula \eqref{form_E}. For the complete coprimality graph with $E=V^{(2)}$, corresponding to all coprimality conditions, 
	the error term is $O(x^{k-1}(\log x)^{k-1})$, and this will be the final error term in both cases. This proves asymptotic 
	formulas \eqref{asympt_form_exact} and \eqref{asympt_form_least}. 
\end{proof}

\begin{proof}[Proof of Corollary {\rm \ref{Cor_asymt_beta_k}}]
	
	Apply formula \eqref{asympt_form_exact} for $r=0$, with the constant $C_{k,0}$ given by \eqref{C_k_r}.
\end{proof}

\section{Examples} \label{Section_Ex}

To illustrate identities \eqref{delta_Euler_prod} and \eqref{A_G_version_new} let us work out the following 
examples.

\begin{example} \label{Ex_1}
	{\rm  Let $k=4$ and $G=(V,E)$ with $V=\{1,2,3,4\}$, $E=\{\{1,2\},\{2,3\},
		\{3,4\}$, $\{4,1\}\}$, that is, $\gcd(n_1,n_2)=1$, $\gcd(n_2,n_3)=1$, $\gcd(n_3,n_4)=1$, 
		$\gcd(n_4,n_1)=1$. See Figure \ref{fig:1}.
		
		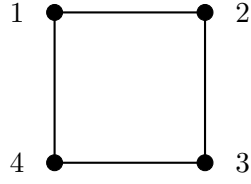
\begin{figure}[ht]
			\centering
			\begin{tikzpicture}
				\draw[fill=black] (-1,1) circle (3pt);
				\draw[fill=black] (1,1) circle (3pt);
				\draw[fill=black] (-1,-1) circle (3pt);
				\draw[fill=black] (1,-1) circle (3pt);
				\node at (-1.5,1) {1};
				\node at (1.5,1) {2};
				\node at (-1.5,-1) {4};
				\node at (1.5,-1) {3};
				\draw[thick] (-1,-1) -- (-1,1) -- (1,1) -- (1,-1);
				\draw[thick] (-1,-1) -- (1,-1);
			\end{tikzpicture}
			\caption{Graph of Example \ref{Ex_1}}
			\label{fig:1}
		\end{figure}
		
		Here $I=\{1,2,3,4\}$ and choose the minimum vertex cover $J=\{1,3\}$. 
		According to \eqref{delta_Euler_prod},
		\begin{equation} \label{delta_Euler_ex_1}
			D'_G(s_1,s_2,s_3,s_4) 
			= \prod_p \left(\sideset{}{'} \sum_{L\subseteq J} \prod_{\ell \in L} \frac1{p^{s_{\ell}}}  \prod_{i\in (J \setminus L) 
				\cup (N(L)\setminus J) } \left(1-\frac1{p^{s_i}}\right)\right)
		\end{equation}
		
		Write the terms of the sum in \eqref{delta_Euler_ex_1}, see Table \ref{tab:1},
		where $x_i=1/p^{s_i}$ ($1\le i\le 4$). Note that all subsets of $J$ are independent.
		
		\begin{table}[ht]
			\centering
			\begin{tabular}{|c|c|c|c|} \hline
				$L$   & $N(L)$ & $(J\setminus L) \cup (N(L)\setminus J)$ & $S_L$ \\ \hline\hline
				$\emptyset$   & $\emptyset$ & $\{1,3\}$ & $(1-x_1)(1-x_3)$ \\ \hline
				$\{1\}$   & $\{2,4\}$ & $\{2,3,4\}$ & $x_1(1-x_2)(1-x_3)(1-x_4)$ \\ \hline
				$\{3\}$   & $\{2,4\}$ &  $\{1,2,4\}$ & $x_3(1-x_1)(1-x_2)(1-x_4)$\\ \hline
				$\{1,3\}$   & $\{2,4\}$ & $\{2,4\}$ & $x_1x_3(1-x_2)(1-x_4)$\\ \hline
			\end{tabular}
			\caption{Terms of the sum in Example \ref{Ex_1}}
			\label{tab:1}
		\end{table}
		
		We obtain
		\begin{align*} 
			D'_G(s_1,\ldots,s_4) & =  \prod_p \left(S_{\emptyset}+S_{\{1\}}+S_{\{3\}}+ S_{\{1,3\}}  \right) \\
			& = \prod_p \left(1-\frac1{p^{s_1+s_2}}- \frac1{p^{s_1+s_4}} -\frac1{p^{s_2+s_3}} - \frac1{p^{s_3+s_4}} 
			+ \frac1{p^{s_1+s_2+s_3}} \right. \\
			& \left. + \frac1{p^{s_1+s_2+s_4}}+\frac1{p^{s_1+s_3+s_4}} + \frac1{p^{s_2+s_3+s_4}} 
			-  \frac1{p^{s_1+s_2+s_3+s_4}} \right).
		\end{align*}
		
		Observe that the terms $\pm 1/p^i$ with $i\in J=\{1,3\}$ cancel out, and we have the terms 
		$-1/p^{s_i + s_j}$ with $\{i,j\}\in E$, according to the edges of $G$. 
		Hence the infinite product is absolutely convergent provided that 
		$\Re (s_{i_1}+\cdots +s_{i_j})>1$ for every $i_1,\ldots,i_j\in \{1,2,3,4\}$ with $i_1< \cdots < i_j$, $2\le j\le 4$.
		
		The asymptotic density of $4$-tuples $(n_1,\ldots,n_4)\in \N^4$ such that $\gcd(n_i,n_j)=1$ with $\{i,j\}\in E$ is
		\begin{align*}
			D'_G(1\ldots,1)= \prod_p \left(1- \frac{4}{p^2}+\frac{4}{p^3} - \frac{1}{p^4} \right).
		\end{align*}
		
		This asymptotic density has been obtained using identity \eqref{A_G_version} by de Reyna and Heyman \cite[Sect.\ 4]{ReyHey2015}.
	}
\end{example}

\begin{example} \label{Ex_2}
	{\rm Now let $k=7$ and $G=(V,E)$ with $V=\{1,2,3,4,5,6,7\}$ and  
		$$E=\{\{1,2\},\{1,3\}, \{2,4\}, \{2,5\}, \{3,4\}, \{4,5\}\},
		$$
		that is, $\gcd(n_1,n_2)=1$, $\gcd(n_1,n_3)=1$, $\gcd(n_2,n_4)=1$, 
		$\gcd(n_2,n_5)=1$, $\gcd(n_3,n_4)=1$, $\gcd(n_4,n_5)=1$. See Figure \ref{fig:2}.
		
		\begin{figure}[ht]
			\centering
			\begin{tikzpicture}
				\draw[fill=black] (-1,1) circle (3pt);
				\draw[fill=black] (1,1) circle (3pt);
				\draw[fill=black] (-1,-1) circle (3pt);
				\draw[fill=black] (1,-1) circle (3pt);
				\draw[fill=black] (2.5,0) circle (3pt);
				\draw[fill=black] (4,0) circle (3pt);
				\draw[fill=black] (6,0) circle (3pt);
				\node at (-1.5,1) {1};
				\node at (1.5,1) {2};
				\node at (-1.5,-1) {3};
				\node at (1.5,-1) {4};
				\node at (3,0) {5};
				\node at (4.5,0) {6};
				\node at (6.5,0) {7};
				\draw[thick] (-1,-1) -- (-1,1) -- (1,1) -- (1,-1) -- (2.5,0);
				\draw[thick] (-1,-1) -- (1,-1);
				\draw[thick] (1,1) -- (2.5,0);
			\end{tikzpicture}
			\caption{Graph of Example \ref{Ex_2}}
			\label{fig:2}
		\end{figure}
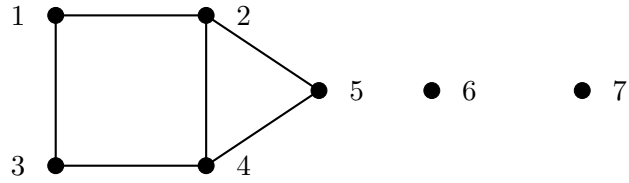
		\vskip1mm
		
		Here $I=\{1,2,3,4,5\}$, since the variables $n_6,n_7$ do not appear in the constraints. 
		Choose the minimum vertex cover $J=\{1,2,4\}$. 
		Consider the subsets $L$ of $J$
		and write the corresponding terms $S_L$ of the sum in \eqref{delta_Euler_prod}, see Table \ref{tab:2},
		where $x_i=p^{-s_i}$ ($1\le i\le 5$). The subsets $L=\{1,2\}$ and $L=\{2,4\}$ do not appear in the sum, 
		since $1,2$ and $2,4$ are adjacent vertices.
		
		\begin{table}[ht]
			\centering
			\begin{tabular}{|c|c|c|c|} \hline
				$L$   & $N(L)$ & $(J\setminus L) \cup (N(L)\setminus J)$ & $S_L$ \\ \hline\hline
				$\emptyset$   & $\emptyset$ & $\{1,2,4\}$ & $(1-x_1)(1-x_2)(1-x_4)$ \\ \hline
				$\{1\}$   & $\{3\}$ & $\{2,3,4\}$ & $x_1(1-x_2)(1-x_3)(1-x_4)$ \\ \hline
				$\{2\}$   & $\{5\}$ &  $\{1,4,5\}$ & $x_2(1-x_1)(1-x_4)(1-x_5)$\\ \hline
				$\{4\}$   & $\{3,5\}$ & $\{1,2,3,5\}$ & $x_4(1-x_1)(1-x_2)(1-x_3)(1-x_5)$\\ \hline
				$\{1,4\}$   & $\{3,5\}$ & $\{2,3,5\}$ & $x_1x_4(1-x_2)(1-x_3)(1-x_5)$\\ \hline
			\end{tabular}
			\caption{Terms of the sum in Example \ref{Ex_2}}
			\label{tab:2}
		\end{table}
		
		It follows that
		\begin{align*} 
			D'_G(s_1,\ldots,s_7)  & = \prod_p \left(S_{\emptyset}+S_{\{1\}}+S_{\{2\}}+ S_{\{4\}} + S_{\{1,4\}} \right) \\
			& = \prod_p \left(1-\frac1{p^{s_1+s_2}}- \frac1{p^{s_1+s_3}} -\frac1{p^{s_2+s_4}} - \frac1{p^{s_2+s_5}} -
			\frac1{p^{s_3+s_4}} - \frac1{p^{s_4+s_5}} \right. \\
			& \left. + \frac1{p^{s_1+s_2+s_3}}+ \frac1{p^{s_1+s_2+s_4}}+\frac1{p^{s_1+s_2+s_5}} + \frac1{p^{s_1+s_3+s_4}}+ \frac1{p^{s_2+s_3+s_4}} 
			\right.  \\
			& \left. +\frac{2}{p^{s_2+s_4+s_5}}  + \frac1{p^{s_3+s_4+s_5}} - \frac1{p^{s_1+s_2+s_3+s_4}}- \frac1{p^{s_1+s_2+s_4+s_5}} - \frac1{p^{s_2+s_3+s_4+s_5}} \right).
		\end{align*}
		
		Observe that the terms $\pm 1/p^i$ with $i,j\in \{1,2,4\}$ cancel out, and we have the terms 
		$-1/p^{s_i + s_j}$ with $\{i,j\}\in E$, according to the edges of $G$. 
		Here the infinite product is absolutely convergent provided that 
		$\Re (s_{i_1}+\cdots +s_{i_j})>1$ for every $i_1,\ldots,i_j\in \{1,2,3,4,5\}$ with $i_1< \cdots < i_j$, $2\le j\le 5$.
		
		The asymptotic density of $7$-tuples $(n_1,\ldots,n_7)\in \N^7$ with the corresponding constraints 
		$\gcd(n_i,n_j)=1$ with $\{i,j\}\in E$ is
		\begin{align*}
			D'_G(1\ldots,1)= \prod_p \left(1- \frac{6}{p^2}+\frac{8}{p^3} - \frac{3}{p^4} \right).
		\end{align*}
		
		Application of identity \eqref{A_G_version} by de Reyna and Heyman \cite{ReyHey2015} is more laborious here, 
		since $G$ has six edges and there are $2^6=64$ subsets of $E$.
	}
\end{example}

\begin{example} \label{Ex_3}
	{\rm  Now consider the case of pairwise coprime integers with $E=\{\{i,j\}: 1\le i<j\le k\}$.
		For $k=4$ the graph is in Figure \ref{fig:3}.
		
		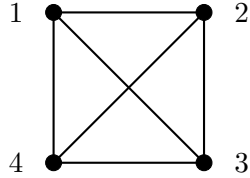
\begin{figure}[ht]
			\centering
			\begin{tikzpicture}
				\draw[fill=black] (-1,1) circle (3pt);
				\draw[fill=black] (1,1) circle (3pt);
				\draw[fill=black] (-1,-1) circle (3pt);
				\draw[fill=black] (1,-1) circle (3pt);
				\node at (-1.5,1) {1};
				\node at (1.5,1) {2};
				\node at (-1.5,-1) {4};
				\node at (1.5,-1) {3};
				\draw[thick] (-1,-1) -- (-1,1) -- (-1,1) -- (1,1) -- (1,-1);
				\draw[thick] (-1,-1) -- (1,-1);
				\draw[thick] (-1,-1) -- (1,1);
				\draw[thick] (-1,1) -- (1,-1);
			\end{tikzpicture}
			\caption{Graph to Example \ref{Ex_3}}
			\label{fig:3}
		\end{figure}
		\vskip1mm
		
		Here $I=\{1,\ldots,k\}$ and choose the minimum vertex cover $J=\{1,\ldots,k-1\}$. The only independent subsets $L$ 
		of $J$ are $L=\emptyset$ and $L=\{1\}$, ..., $L=\{k-1\}$ having one single element.  
		
		If $L=\emptyset$, then $N(L)=\emptyset$, $(J\setminus L) \cup (N(L)\setminus J)=J$ and obtain, with $x_i=p^{-s_i}$
		($1\le i\le k$),
		\begin{align*}
			S_{\emptyset}= (1-x_1)\cdots (1-x_{k-1}).
		\end{align*}
		
		If $L=\{\ell\}$, $\ell\in J$, then $N(L)=\{k\}$, $(J\setminus L) \cup (N(L)\setminus J) =\{1,\ldots,k\}\setminus \{\ell\}$, and have
		\begin{align*}
			S_{\{\ell\}} = x_{\ell} \prod_{\substack{j=1\\ j\ne \ell}}^k (1-x_j).
		\end{align*}
		
		We need to evaluate the sum
		\begin{equation} \label{sum}
			S:= S_{\emptyset}+\sum_{\ell=1}^{k-1} S_{\{\ell \}}. 
		\end{equation}
		
		Let $e_j(x_1,\dots,x_k)=\sum_{1\le i_1<\ldots<i_j\le k}
		x_{i_1}\cdots x_{i_j}$ denote the elementary symmetric polynomials
		in $x_1,\ldots,x_k$ of degree $j$ ($j\ge 0$). By convention,
		$e_0(x_1,\ldots,x_k)=1$.
		
		Consider the polynomial
		\begin{align*} 
			P(x) =\prod_{j=1}^k (x-x_j) = \sum_{j=0}^k (-1)^j e_j(x_1,\dots,x_k) x^{k-j}.
		\end{align*}
		
		Its derivative is
		\begin{align*} 
			P'(x)= \sum_{j=0}^{k-1} (-1)^j (k-j) e_j(x_1,\dots,x_k)
			x^{k-j-1},
		\end{align*}
		and on the other hand
		\begin{align*} 
			P'(x)= \sum_{j=1}^k \prod_{\substack{i=1\\ i\ne j}}^k (x-x_i).
		\end{align*} 
		
		We obtain that the sum \eqref{sum} is
		\begin{align*}
			S & =\prod_{j=1}^{k-1} (1-x_j) + \sum_{j=1}^{k-1} x_j \prod_{\substack{i=1\\ i\ne j}}^k (1-x_i) \\
			& = \sum_{j=1}^k \prod_{\substack{i=1\\ i\ne j}}^k (1-x_i) -(k-1) \prod_{j=1}^k (1-x_j)\\
			& = P'(1) -(k-1)P(1) \\
			& = 1+ \sum_{j=2}^k (-1)^{j-1} (j-1) e_j(x_1,\ldots,x_k), 
		\end{align*}
		that is,
		\begin{align*}
			\sum_{\substack{n_1,\ldots,n_k=1\\ \gcd(n_i,n_j)=1, \, 1\le i<j\le k}}^{\infty} \frac1{n_1^{s_1}\cdots n_k^{s_k}} 
			= \prod_p \left(1+ \sum_{j=2}^k (-1)^{j-1} (j-1) e_j(p^{-s_1},\ldots,p^{-s_k})\right). 
		\end{align*}
		
		For $s_1=\cdots = s_k=1$ this gives identity \eqref{density_pairwise_rel_prime}, representing the asymptotic density of $k$-tuples  with pairwise relatively prime components.
	}
\end{example}

\end{document}